\title{Uniqueness and numerical resolution via iterated sensitivity equation of an inverse electromagnetic coefficient problem with partial boundary data}
\date{October 1, 2024}
\abstract{
	In this paper we study an inverse boundary value problem for Maxwell's equations. The goal is to reconstruct perturbations in the refractive index of the medium inside an object from the knowledge of the tangential trace of an electric field on a part of the boundary of the domain. We first provide a uniqueness result for this inverse problem. Then, we propose a new approach to reconstruct numerically the perturbations. This complete procedure is based on the minimization of a cost functional involving an iterated sensitivity equation.
}
\begin{document}

	\section{Introduction}

	The study of microwave imaging is of great interest, with potential medical or industrial applications. The idea is to take benefits of the fact that the interaction between electromagnetic fields and a material can be described from the dielectric properties of the latter. For example, this modality is in study for medical applications like the diagnostic of strokes \cite{SemenovSeiserStoegmannAuff14, Tournieretal17}. Indeed, a stroke affects tissues of the brain, resulting in variations of their dielectric properties. The aim is then to recover these variations in order to characterize the stroke and apply the right treatment to the patient. In the same way, microwave imaging is a promising alternative to mammography \cite{KwonLee16}.

	Mathematically speaking, microwave imaging defines an inverse problem. Let $\Omega$ be a bounded and simply connected domain of $\R^p$, $p \in \{2,3\}$, with boundary $\Gamma \coloneqq \partial\Omega$ and unit outward normal denoted by $\bfn$. The magnetic permeability of the medium in $\Omega$ is assumed to be the same as in the vacuum, $\mu_0$. The electric permittivity and conductivity in the medium are two functions of the space variable $\bfx$, respectively denoted by $\eps$ and $\sigma$. We then define the complex refractive index by
	\[\kappa \colon \Omega \ni \bfx \mapsto \frac{1}{\eps_0}\left(\eps(\bfx) + i\frac{\sigma(\bfx)}{\omega}\right) \in \C,\]
	where $\eps_0$ is the electric permittivity in vacuum and $\omega > 0$ is the (fixed) wave frequency. The electric field $\bfE$ in $\Omega$ then satisfies the time-harmonic Maxwell's equation
	\begin{equation}
		\label{eq:maxwell}
		\curl\curl\bfE - k^2\kappa\bfE = 0,
	\end{equation}
	with $k \coloneqq \omega\sqrt{\mu_0\eps_0}$ being the wave number.

	We are interested in the inverse boundary value problem consisting in recovering the refractive index in the whole domain $\Omega$ from the knowledge of partial surface measurements at fixed frequency. The part of the boundary $\Gamma_0 \subset \Gamma$ where measurements will be assumed to be known is called the accessible part. We assume that $\operatorname{meas}(\Gamma_0) > 0$ and we denote by $\Gamma_1 \coloneqq \Gamma \setminus \overline{\Gamma_0}$ the nonaccessible part. In this paper, we study the question of the uniqueness for such a problem. We then provide an algorithm to numerically solve this inverse problem.

	We assume here that the unknown function $\kappa$ is a perturbation of a background refractive index, denoted by $\kappa_0$ and assumed to be known in $\Omega$. Moreover, we assume that the perturbations are not glued to the boundary. In other words, there exists a tubular neighborhood $\mcV$ of $\Gamma$ in $\Omega$, that is:
	\[\exists r_\mcV > 0 / \forall \bfx \in \Gamma, B(\bfx, r_\mcV) \cap \Omega \subset \mcV,\]
	where $B(\bfx, r_\mcV)$ is the open ball centered at $\bfx$ of radius $r_\mcV$, such that $\supp(\kappa - \kappa_0)$ is contained in $\Omega \setminus \bar{\mcV}$. This assumption allows us to write $\restriction{\kappa}{\mcV} = \restriction{\kappa_0}{\mcV}$: the refractive index is known in $\mcV$. An example of such configuration is provided in \autoref{fig:configuration}.

	Under this assumption, this problem shares similarities with the reconstruction of obstacles from far-field measurements, in the sense that both aim to retrieve unknowns properties from field data. The key difference lies in the data: we are using here near-field measurements. This leads to approaches that are different from the ones involving far-field measurements. Examples of such approaches, in the case of Maxwell's equations, can be found in \cite{HeLiLiuWang24, ChenDengGaoLiLiu24, LiuRondiXiao19, HaddarMonk02} and references there-in.

	Assuming that the refractive index is known in a tubular neighborhood $\mcV$ of $\Gamma$ has two benefits. From a theoretical point of view, it allows us to state a new uniqueness result without having to fix some specific geometric nor physical conditions on the nonaccessible part of the boundary. This will be done in \autoref{sec:uniqueness}. From a practical point of view, this assumption also allows us to have a well-defined system for the data completion problem. In \autoref{sec:transmission}, we then propose a way to retrieve total data from partial ones: starting with data known on the accessible part of the boundary, we obtain data defined on a complete boundary. \autoref{sec:sensitivity} is devoted to an application mapping amplitudes of perturbations to corresponding electric fields. We show that this application can be differentiated indefinitely, the value of its derivatives being given by the so called iterated sensitivity equation. This yields an original approach to regularize a classical cost function in \autoref{sec:cost}. In \autoref{sec:numerics}, we describe a complete procedure to reconstruct perturbations (both their supports and amplitudes) from the knowledge of the electric field on the accesssible part. This procedure is then tested against different 2D and 3D configurations, first to retrieve perturbations of simple shape, and then to study how the algorithm behaves with more complex inhomogeneities.

	\section{Uniqueness result}
	\label{sec:uniqueness}

	Let us introduce the vector space
	\[H(\curl) \coloneqq \set{\bfu \in L^2(\Omega)^3}{\curl\bfu \in L^2(\Omega)^3}.\]
	For any vector field $\bfu \in H(\curl)$, the tangential trace is defined by continuous extension of the mapping $\gamma_t(\bfu) = \restriction{\bfu}{\Gamma} \times \bfn$ (see for example \cite{Monk03}). We introduce the trace space
	\[Y(\Gamma) \coloneqq \set{\bff \in H^{-1/2}(\Gamma)^3}{\exists \bfu \in H(\curl) / \gamma_t(\bfu) = \bff},\]
	and its restriction in the distributional sense to $\Gamma_0$
	\[Y(\Gamma_0) \coloneqq \set{\restriction{\bff}{\Gamma_0}}{\bff \in Y(\Gamma)}.\]

	The inverse boundary value problem we are interested in is to determine the electric permittivity $\eps$ and conductivity $\sigma$ from boundary measurements taken on the accessible part $\Gamma_0$, obtained from a given boundary source term, at a fixed frequency $\omega$. These measurements are modeled by a Cauchy data set $C(\eps, \sigma; \Gamma_0)$, as defined in the following.

	\begin{definition}
		\label{def:admissible}
		The pair of coefficients $\eps$ and $\sigma$ is said admissible if both are in $\mcC^1(\bar{\Omega})$, with $\eps \geq \tilde{\eps}$ and $\sigma \geq \tilde{\sigma}$ almost everywhere in $\Omega$, for some constants $\tilde{\eps} > 0$ and $\tilde{\sigma} > 0$.
	\end{definition}

	\begin{definition}
		\label{def:cauchy_data}
		For a pair of admissible coefficients $(\eps,\sigma)$ defined in $\Omega$ as in \autoref{def:admissible}, the corresponding Cauchy data set $C(\eps, \sigma; \Gamma_0)$ at a fixed frequency $\omega > 0$ consists of pairs $(\bff, \bfg) \in Y(\Gamma_0) \times Y(\Gamma_0)'$ such that there exists a field $\bfE \in H(\curl)$ satisfying \eqref{eq:maxwell} for $\kappa \coloneqq (\eps + i\sigma / \omega) / \eps_0$ with boundary conditions $\restriction{\bfE}{\Gamma_0} \times \bfn = \bff$ and $\restriction{\curl\bfE}{\Gamma_0} \times \bfn = \bfg$.
	\end{definition}

	The uniqueness question for our inverse problem reads as follows. Given a frequency $\omega > 0$ and two pairs of admissible coefficients $(\eps_j,\sigma_j)$, $j \in \{1,2\}$, does $C(\eps_1, \sigma_1; \Gamma_0) = C(\eps_2, \sigma_2; \Gamma_0)$ imply $\eps_1 = \eps_2$ and $\sigma_1 = \sigma_2$ in $\Omega$? A uniqueness result for partial data is stated in \cite{Caro11}. However, geometrical conditions are imposed on the nonaccessible part $\Gamma_1$, which is supposed to be part of either a plane or a sphere. In \cite{Brownetal16}, these geometrical conditions are relaxed, but are replaced by a perfect conducting boundary condition: $\restriction{\bfE}{\Gamma_1} \times \bfn = 0$. Both hypotheses can be restrictive in applications. The result stated in \autoref{thm:uniqueness} can be seen as an improvement of these results, as neither geometrical nor boundary conditions are imposed on the nonaccessible part. Although simple, the idea behind \autoref{thm:uniqueness} is new, and uses results from \cite{Brownetal16} and \cite{CaroZhou14}.

	\begin{theorem}
		\label{thm:uniqueness}
		Let $\omega > 0$. Assume that $(\eps_j, \sigma_j)$, $j \in \{1,2\}$, are two pairs of admissible coefficients (see \autoref{def:admissible}) such that $\eps_1 = \eps_2$ and $\sigma_1 = \sigma_2$ in $\bar{\mcV}$ where $\mcV$ is a tubular neighborhood of $\Gamma$. Then $C(\eps_1, \sigma_1; \Gamma_0) = C(\eps_2, \sigma_2; \Gamma_0)$ implies $\eps_1 = \eps_2$ and $\sigma_1 = \sigma_2$ in $\Omega$.
	\end{theorem}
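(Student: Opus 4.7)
My plan is to reduce the partial-data hypothesis $C(\eps_1, \sigma_1; \Gamma_0) = C(\eps_2, \sigma_2; \Gamma_0)$ to the full-boundary equality $C(\eps_1, \sigma_1; \Gamma) = C(\eps_2, \sigma_2; \Gamma)$, and then invoke the full-data uniqueness theorem of \cite{CaroZhou14}. The tubular neighborhood assumption is precisely what allows such a reduction: it forces the two Maxwell operators to coincide on $\mcV$, which opens the door to a unique continuation argument that propagates the vanishing of the difference of two electric fields from $\Gamma_0$ to the whole of $\mcV$, and hence up to the remaining part $\Gamma_1$ of the boundary.

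Concretely, I would pick any pair $(\bff, \bfg) \in C(\eps_1, \sigma_1; \Gamma)$, carried by some $\bfE_1 \in H(\curl)$ solving \eqref{eq:maxwell} with $\kappa_1$, and look at its restriction to $\Gamma_0$. By hypothesis this restriction also belongs to $C(\eps_2, \sigma_2; \Gamma_0)$, so it is attained by some $\bfE_2 \in H(\curl)$ solving \eqref{eq:maxwell} with $\kappa_2$ and matching the tangential traces of $\bfE_1$ and of $\curl\bfE_1$ on $\Gamma_0$. Setting $\bfW \coloneqq \bfE_1 - \bfE_2$, and using that $\kappa_1 = \kappa_2 = \kappa_0$ on $\bar{\mcV}$, one sees that $\bfW$ satisfies on $\mcV$ the single system
\[\curl\curl\bfW - k^2\kappa_0\bfW = 0,\]
with vanishing tangential trace and vanishing tangential curl trace on the open piece $\Gamma_0 \subset \partial\mcV$. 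A unique continuation argument for this Maxwell system, which I would import from the Carleman-estimate machinery used in \cite{Brownetal16}, then forces $\bfW \equiv 0$ throughout $\mcV$, so that $\bfE_1$ and $\bfE_2$ share the same full Cauchy data on all of $\Gamma$. Hence $(\bff, \bfg) \in C(\eps_2, \sigma_2; \Gamma)$; swapping the indices gives the reverse inclusion, and applying \cite{CaroZhou14} to the resulting full-data equality yields $\eps_1 = \eps_2$ and $\sigma_1 = \sigma_2$ in $\Omega$.

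The main obstacle is the unique continuation step from $\Gamma_0$ into $\mcV$. Because of the kernel of $\curl\curl$ on gradients, classical Holmgren does not apply directly and one genuinely needs a Maxwell-tailored version, valid under the $\mcC^1$ regularity of $\kappa_0$ that admissibility guarantees. A secondary point to verify is the connectedness of $\mcV$ needed to propagate the vanishing across the tubular neighborhood, which holds because $\Gamma$ is connected for the simply connected bounded domains considered here in dimensions two and three. Once these two ingredients are secured, the rest of the argument is essentially bookkeeping on Cauchy data.
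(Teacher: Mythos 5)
Your proposal is correct and follows essentially the same route as the paper: restrict a full Cauchy pair to $\Gamma_0$, form the difference field on $\mcV$ where the coefficients coincide, apply the Maxwell unique continuation result of \cite{Brownetal16} (the paper uses Lemma~5.4,~ii there) to upgrade the partial-data equality to $C(\eps_1,\sigma_1;\Gamma)=C(\eps_2,\sigma_2;\Gamma)$, and conclude with the full-data theorem of \cite{CaroZhou14}. The only detail the paper makes explicit that you leave implicit is checking that the coefficients and their first derivatives agree on $\Gamma$, which is the precise hypothesis of the Caro--Zhou theorem and follows immediately from equality on $\bar{\mcV}$.
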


	\begin{proof}
		We begin to prove that, under these hypotheses, Cauchy data sets coincide not only on $\Gamma_0$, but on the whole boundary $\Gamma$. To this end, consider a couple $(\bff, \bfg) \in C(\eps_1, \sigma_1; \Gamma)$. Then, there exists $\bfE_1 \in H(\curl)$ satisfying
		\[
			\left\{
				\begin{aligned}
					\curl\curl\bfE_1 - k^2\kappa_1\bfE_1 &= 0, && \text{in } \Omega, \\
					\bfE_1 \times \bfn &= \bff, && \text{on } \Gamma, \\
					\curl\bfE_1 \times \bfn &= \bfg, && \text{on } \Gamma,
				\end{aligned}
			\right.
		\]
		where $\kappa_1 \coloneqq (\eps_1 + i\sigma_1 / \omega) / \eps_0$. Since the boundary conditions are obviously satisfied on the accessible part $\Gamma_0$, we get $(\bff, \bfg) \in C(\eps_1, \sigma_1; \Gamma_0)$. Thus, $(\bff, \bfg) \in C(\eps_2, \sigma_2; \Gamma_0) = C(\eps_1, \sigma_1; \Gamma_0)$ by assumption. Therefore, there is a field $\bfE_2 \in H(\curl)$ such that
		\[
			\left\{
				\begin{aligned}
					\curl\curl\bfE_2 - k^2\kappa_2\bfE_2 &= 0, && \text{in } \Omega, \\
					\bfE_2 \times \bfn &= \bff, && \text{on } \Gamma_0, \\
					\curl\bfE_2 \times \bfn &= \bfg, && \text{on } \Gamma_0,
				\end{aligned}
			\right.
		\]
		where $\kappa_2 \coloneqq (\eps_2 + i\sigma_2 / \omega) / \eps_0$. Note that the boundary conditions are only satisfied on $\Gamma_0$. Now, we define $\bfE \coloneqq \bfE_1 - \bfE_2$. In the neighborhood $\mcV$ of $\Gamma$, $\bfE$ then satisfies
		\[
			\left\{
				\begin{aligned}
					\curl\curl\bfE - k^2\kappa\bfE &= 0, && \text{in } \mcV, \\
					\bfE \times \bfn &= 0, && \text{on } \Gamma_0, \\
					\curl\bfE \times \bfn &= 0, && \text{on } \Gamma_0,
				\end{aligned}
			\right.
		\]
		where $\kappa \coloneqq \kappa_1 = \kappa_2$ by assumption on $\mcV$. We then apply \cite[Lemma~5.4, ii]{Brownetal16} which yields $\bfE \equiv 0$ in $\bar{\mcV}$. Consequently, we get $\bfE_2 \times \bfn = \bfE_1 \times \bfn = \bff$ and $\curl\bfE_2 \times \bfn = \curl\bfE_1 \times \bfn = \bfg$ on the whole boundary $\Gamma$. Then, $(\bff, \bfg)$ belongs to the Cauchy data set $C(\eps_2, \sigma_2; \Gamma)$. Changing the roles of $(\eps_1, \sigma_1)$ and $(\eps_2, \sigma_2)$ proves that
		\[C(\eps_1, \sigma_1; \Gamma) = C(\eps_2, \sigma_2; \Gamma).\]
		Now, we infer from the assumptions on the coefficients that $\partial^\alpha \eps_1 = \partial^\alpha \eps_2$ and $\partial^\alpha \sigma_1 = \partial^\alpha \sigma_2$ on the boundary $\Gamma$ for any multi-index $\alpha \in \N^3$ such that $|\alpha| \leq 1$. These properties are the assumptions of the global uniqueness theorem of Caro and Zhou (see \cite[Theorem~1.1]{CaroZhou14}). This gives $\eps_1 = \eps_2$ and $\sigma_1 = \sigma_2$ in $\Omega$ and completes the proof.
	\end{proof}

	\section{Data transmission}
	\label{sec:transmission}

	In this section, we are interested in the data completion problem for Maxwell equation. The entry data for our inverse problem is the knowledge of the tangential trace $\bfg_D \coloneqq \bfE \times \bfn$ on the accessible part $\Gamma_0$, with $\bfE$ being the electric field resulting from the boundary source term $\bfg_N \coloneqq \curl\bfE \times \bfn$. The question of data completion then reads as follow: how can we compute the missing data, that is $\bfE \times \bfn$ on the whole boundary $\Gamma$?

	As the propagation of the electric field inside $\Omega$ is modeled by~\eqref{eq:maxwell}, it seems natural to consider the following Cauchy problem:
	\begin{equation}
		\label{eq:cauchy_Omega}
		\left\{
			\begin{aligned}
				\curl\curl\bfE - k^2\kappa\bfE &= 0, && \text{in } \Omega, \\
				\bfE \times \bfn &= \bfg_D, && \text{on } \Gamma_0, \\
				\curl\bfE \times \bfn &= \bfg_N, && \text{on } \Gamma_0.
			\end{aligned}
		\right.
	\end{equation}
	We could then define the missing boundary data as the traces of the solution of problem~\eqref{eq:cauchy_Omega}. However, solving this problem requires the knowledge of the refractive index $\kappa$ in the whole domain $\Omega$, which is unknown in the context of our inverse problem. To work around this issue, we use the main hypothesis stated on the refractive index: $\kappa$ is equal to the (known) background refractive index $\kappa_0$ in $\mcV$, the tubular neighborhood of $\Gamma$. This allows us to write that $\bfE$ satisfies the same Cauchy problem as defined above, but in $\mcV$, where the refractive index is known:
	\begin{equation}
		\label{eq:cauchy}
		\left\{
			\begin{aligned}
				\curl\curl\bfE - k^2\kappa_0\bfE &= 0, && \text{in } \mcV, \\
				\bfE \times \bfn &= \bfg_D, && \text{on } \Gamma_0, \\
				\curl\bfE \times \bfn &= \bfg_N, && \text{on } \Gamma_0.
			\end{aligned}
		\right.
	\end{equation}

	From \cite[Lemma~5.4, ii]{Brownetal16}, we know that, if problem~\eqref{eq:cauchy} admits a solution, then it is unique. However, Cauchy problems are known to be ill-posed \cite{Alessandrinietal09, BenBelgacem07}. Then, in order to solve problem~\eqref{eq:cauchy} numerically, it is necessary to regularize it. In \cite{DarbasHeleineLohrengel_QR}, we proposed to adapt the quasi-reversibility method to Maxwell's equations to solve problem~\eqref{eq:cauchy}. In particular, we proved the convergence of the relaxed and regularized formulation to the solution of this Cauchy problem in the case of noisy data. This method gives satisfying results. However, the question of how to choose the involved penalization parameter remains open: in the mentioned paper, we used the method of L-curves, and numerical experiments show that it is indeed promising, but there is no theoretical results to prove its convergence. Moreover, from a numerical point of view, using a penalization parameter that can be very small leads to linear systems that can be difficult to solve, especially in 3D domains.

	As an alternative, we propose here to use the iterated quasi-reversibility method, introduced in \cite{Darde16}. To this end, we introduce the auxiliary unknown $\bfF \coloneqq \curl\bfE$. In order to keep simple notations, we describe here only the 3D case. However, the method can also be written in 2D in a similar way, adapting the spaces by taking into account that there are two $\curl$ operators in this case. Then, problem~\eqref{eq:cauchy} can be written:
	\begin{equation}
		\label{eq:cauchy_F}
		\left\{
			\begin{aligned}
				\curl\bfF - k^2\kappa_0\bfE &= 0, && \text{in } \mcV, \\
				\curl\bfE - \bfF &= 0, && \text{in } \mcV, \\
				\bfE \times \bfn &= \bfg_D, && \text{on } \Gamma_0, \\
				\bfF \times \bfn &= \bfg_N, && \text{on } \Gamma_0.
			\end{aligned}
		\right.
	\end{equation}
	To take into account potential noise, both boundary data $\bfg_D$ and $\bfg_N$ are assumed to belong to $L^2(\Gamma_0)^3$. It is then natural to look for $\bfE$ and $\bfF$ in
	\[\tilde{H}(\curl) \coloneqq \set{\bfu \in L^2(\mcV)^3}{\curl\bfu \in L^2(\mcV)^3 \text{ and } \bfu \times \bfn \in L^2(\Gamma_0)^3},\]
	which is a Hilbert space, endowed with the scalar product
	\[\forall (\bfu,\bfv) \in \tilde{H}(\curl)^2, \quad \dotprod{\bfu}{\bfv}[\tilde{H}(\curl)] \coloneqq \int_\mcV (\bfu \cdot \bar{\bfv} + \curl\bfu \cdot \curl\bar{\bfv}) + \int_{\Gamma_0} (\bfu \times \bfn) \cdot (\bar{\bfv} \times \bfn)\]
	and the induced norm $\norm{}[\tilde{H}(\curl)]^2 \coloneqq \dotprod{}{}[\tilde{H}(\curl)]$.

	Let $\mcX \coloneqq \tilde{H}(\curl)^2$ and $\mcY \coloneqq L^2(\mcV)^3 \times L^2(\mcV)^3 \times L^2(\Gamma_0)^3 \times L^2(\Gamma_0)^3$. These are Hilbert spaces endowed with their respective graph norms. Let us introduce the operator
	\[
		\begin{aligned}
			A \colon \hspace{2em} \mcX & \to \mcY \\
			(\bfE,\bfF) & \mapsto (\curl\bfF - k^2\kappa_0\bfE, \curl\bfE - \bfF, \restriction{\bfE}{\Gamma_0} \times \bfn, \restriction{\bfF}{\Gamma_0} \times \bfn)
		\end{aligned}
	\]
	so that problem~\eqref{eq:cauchy_F} can be rewritten:
	\[\text{Find } \bfx = (\bfE,\bfF) \in \mcX \text{ such that } A\bfx = \bfy \coloneqq (0, 0, \bfg_D, \bfg_N) \in \mcY.\]

	\begin{proposition}
		\label{prop:properties_A}
		The operator $A$ is linear, continuous, one-to-one, not onto and has a dense range.
	\end{proposition}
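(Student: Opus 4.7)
The plan is to treat the five properties in increasing order of difficulty. Linearity is immediate from the definition of $A$. For continuity, I would bound each of the four coordinates of $A(\bfE,\bfF) \in \mcY$: the first two components by $\|\curl\bfE\|_{L^2(\mcV)^3}$, $\|\curl\bfF\|_{L^2(\mcV)^3}$ and $\|\bfE\|_{L^2(\mcV)^3}$, using $\kappa_0 \in \mcC^1(\bar{\mcV})$, and the last two by the $L^2(\Gamma_0)^3$ contribution that is built directly into the norm of $\tilde H(\curl)$. Injectivity is essentially a reformulation of the earlier uniqueness argument: if $A(\bfE,\bfF) = 0$, then $\bfF = \curl\bfE$ and $\bfE$ solves the homogeneous version of~\eqref{eq:cauchy}, so \cite[Lemma~5.4, ii]{Brownetal16} yields $\bfE \equiv 0$, hence $\bfF \equiv 0$.

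For density of the range, the plan is to characterize the orthogonal complement of $\operatorname{Im}(A)$ in $\mcY$. Given $\bfz = (z_1, z_2, z_3, z_4) \in \mcY$ orthogonal to $\operatorname{Im}(A)$, I would first test the identity $\dotprod{A(\bfE,\bfF)}{\bfz}[\mcY] = 0$ against $(\bfE,\bfF)$ with smooth compactly supported components in $\mcV$, which yields in the distributional sense the relations $z_2 = \curl z_1$ and $\curl z_2 - k^2\bar{\kappa_0} z_1 = 0$ in $\mcV$; in particular $z_1, z_2 \in H(\curl; \mcV)$. Integration by parts against arbitrary $(\bfE,\bfF) \in \mcX$ is then legitimate, the boundary terms on $\Gamma_1$ and on the interior interface $\Sigma \coloneqq \partial\mcV \cap \Omega$ being interpreted as $H^{-1/2}/H^{1/2}$ duality pairings while the $\Gamma_0$ terms remain in $L^2$. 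Varying the tangential traces of $\bfE$ and $\bfF$ independently on $\Gamma_1 \cup \Sigma$ forces $z_1 \times \bfn$ and $(\curl z_1) \times \bfn$ to vanish there; applying \cite[Lemma~5.4, ii]{Brownetal16} again — now to a Cauchy problem for $z_1$ in $\mcV$ with vanishing Cauchy data on $\Sigma$ — gives $z_1 \equiv 0$ in $\mcV$, hence $z_2 \equiv 0$. The residual $L^2(\Gamma_0)^3$ pairings with $z_3$ and $z_4$ must then vanish for arbitrary tangential traces on $\Gamma_0$, which finally forces $z_3 = z_4 = 0$.

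Failure of surjectivity I would prove by contradiction: if $A$ were onto, then by injectivity $A$ would be bijective, and the Banach open mapping theorem would produce a continuous inverse $A^{-1} \colon \mcY \to \mcX$; this would amount to a Hadamard-stable dependence of the Cauchy solution of~\eqref{eq:cauchy_F} on its data, contradicting the well-documented ill-posedness of such Cauchy problems for elliptic-type systems recalled in \cite{Alessandrinietal09, BenBelgacem07}.

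The main obstacle in this plan is the density step. The orthogonality identity mixes $L^2$ pairings on $\Gamma_0$ with $H^{-1/2}/H^{1/2}$ duality pairings on $\partial\mcV \setminus \Gamma_0$, so one must exploit these two layers of test fields carefully to isolate a homogeneous Maxwell Cauchy problem for $z_1$ on the interior interface $\Sigma$. Reinvoking the uniqueness lemma of \cite{Brownetal16} on $\Sigma$ rather than on $\Gamma_0$ also deserves a brief verification that the result is insensitive to the particular open subset of $\partial\mcV$ on which the Cauchy data vanish, which is the case under mild regularity of $\Sigma$.
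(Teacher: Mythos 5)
Your proposal is correct and follows essentially the same route as the paper: injectivity via the uniqueness lemma of \cite{Brownetal16}, and density of the range by characterizing the orthogonal complement through compactly supported test fields (yielding the adjoint Maxwell system), then integration by parts to extract vanishing Cauchy data on $\partial\mcV \setminus \Gamma_0$ and a final application of the same uniqueness lemma. The only divergence is the non-surjectivity step, where you argue by contradiction via the open mapping theorem (a bounded inverse would contradict the instability of the Cauchy problem), whereas the paper directly invokes the fact that the Cauchy problem may admit no solution for some data; both rest on the known ill-posedness recalled in \cite{Alessandrinietal09, BenBelgacem07}.
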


	\begin{proof}
		It is clear that $A$ is linear and continuous. As the Cauchy problem~\eqref{eq:cauchy} admits at most one solution but may have no solution, $A$ is one-to-one but not onto. Let us prove that its range is dense in $\mcY$. To this end, we will prove that $\range(A)^\perp = \{0\}$. Let $\bfy = (\bfu,\bfv,\bff,\bfg) \in \range(A)^\perp$. In other words, we have :
		\[\forall (\bfE,\bfF) \in \mcX, \quad \dotprod{A(\bfE,\bfF)}{(\bfu,\bfv,\bff,\bfg)}[\mcY] = 0,\]
		which reads:
		\[\forall (\bfE,\bfF) \in \mcX, \int_\mcV ((\curl\bfF - k^2\kappa_0\bfE) \cdot \bar{\bfu} + (\curl\bfE - \bfF) \cdot \bar{\bfv}) + \int_{\Gamma_0} ((\bfE \times \bfn) \cdot \bar{\bff} + (\bfF \times \bfn) \cdot \bar{\bfg}) = 0.\]
		We first consider $\bfF \in \mcC^\infty_c(\mcV)^3$ and $\bfE = 0$, which yields
		\[\int_\mcV (\curl\bfF \cdot \bar{\bfu} - \bfF \cdot \bar{\bfv}) = 0.\]
		An integration by parts then gives $\curl\bfu = \bfv$ in $\mcV$. As $\bfv \in L^2(\mcV)^3$, this yields $\bfu \in H(\curl; \mcV)$. By considering $\bfE \in \mcC^\infty_c(\mcV)^3$ and $\bfF = 0$, a similar argument gives $\curl\bfv = k^2\overline{\kappa_0}\bfu$ and then $\bfv \in H(\curl; \mcV)$. Now, we take $\bfE = 0$ and $\bfF \in \tilde{H}(\curl)$. An integration by parts yields
		\[\duality{\bfF \times \bfn}{\bfn \times (\bfu \times \bfn)}[\partial\mcV] = \int_{\Gamma_0} (\bfF \times \bfn) \cdot \bar{\bfg}.\]
		Then, we obtained:
		\[\bfg = \begin{cases}
			\bfn \times (\bfu \times \bfn), & \text{on } \Gamma_0, \\
			0, & \text{on } \partial\mcV \setminus \Gamma_0.
		\end{cases}\]
		Similarly, by taking $\bfE \in \tilde{H}(\curl)$ and $\bfF = 0$, we get
		\[\bff = \begin{cases}
			\bfn \times (\bfv \times \bfn), & \text{on } \Gamma_0, \\
			0, & \text{on } \partial\mcV \setminus \Gamma_0.
		\end{cases}\]
		Grouping all these results together, we obtain that $\bfu \in H(\curl; \mcV)$ satisfies
		\[
			\left\{
				\begin{aligned}
					\curl\curl\bfu - k^2\overline{\kappa_0}\bfu &= 0, && \text{in } \mcV, \\
					\bfu \times \bfn &= 0, && \text{on } \partial\mcV \setminus \Gamma_0, \\
					\curl\bfu \times \bfn &= 0, && \text{on } \partial\mcV \setminus \Gamma_0. \\
				\end{aligned}
			\right.
		\]
		By uniqueness of the solution to the Cauchy problem, we then have $\bfu \equiv 0$, and then $\bfv \equiv 0$, $\bff \equiv 0$ and $\bfg \equiv 0$, so $\bfy = 0$, which ends the proof.
	\end{proof}

	Let us now introduce the sesquilinear form
	\[
		\begin{aligned}
			b \colon \hspace{5em} \mcX \times \mcX & \to \C \\
			((\bfu_1,\bfv_1), (\bfu_2,\bfv_2)) & \mapsto \int_\Omega (\bfu_1 \cdot \bar{\bfv_1} + \bfu_2 \cdot \bar{\bfv_2}).
		\end{aligned}
	\]
	It is readily seen that the norm $\sqrt{\norm{A\cdot}[\mcY]^2 + b(\cdot,\cdot)}$ is equivalent to $\norm{}[\mcX]$. Let $\bfy = (0, 0, \bfg_D, \bfg_N) \in \mcY$ be such that problem~\eqref{eq:cauchy_F} admits a (unique) solution $(\bfE,\bfF)$. For $\delta > 0$, we define the sequence $(\bfx_\delta^M)_{M \in \N \cup \{-1\}}$ by $\bfx_\delta^{-1} \coloneqq 0$ and then, for all $M \in \N$, $x_\delta^M$ is the unique element in $\mcX$ satisfying:
	\[\forall \bfx \in \mcX, \quad \dotprod{A\bfx_\delta^M}{A\bfx}[\mcY] + \delta b(\bfx_\delta^M, \bfx) = \dotprod{\bfy}{A\bfx}[\mcY] + \delta b({\bfx_\delta^{M-1}, \bfx}).\]
	Results from \cite{Darde16} then guarantees, using \autoref{prop:properties_A}, that
	\[\lim_{M \to +\infty} \bfx_\delta^M = (\bfE,\bfF).\]

	As it has been observed in \cite{DarbasHeleineLohrengel_QR}, the error committed when the solution of the Cauchy problem is approximated using the quasi-reversibility method is lower in the interior of the domain $\mcV$ than on the nonaccessible boundary $\Gamma_1$. Then, in view of solving our inverse problem, it seems interesting to not complete the data on the whole boundary $\Gamma$, which requires to use the reconstructed field on $\Gamma_1$. Instead, we introduce an artificial boundary inside $\mcV$, $\Gamma_\text{int} \coloneqq \partial U$, where $U$ is a open set such that $U \cap \mcV \neq \emptyset$, as illustrated in \autoref{fig:configuration}. The quasi-reversibility method allows us to transmit the partial data from $\Gamma_0$ to $\Gamma_\text{int}$: then, we can consider that we have total data on the boundary of the subdomain $U$.

	\begin{figure}[hbt]
		\centering

		\begin{tikzpicture}[scale = 0.8]
			\useasboundingbox (-5.5,-3) rectangle (5.5,5.5);

			\newcommand{\domainboundarycoordinates}{
				(5,0) .. controls +(0,2) and +(1,-1) .. (3,3)
				.. controls +(-1,1) and +(1,0) .. (0,5)
				.. controls +(-3,0) and +(0,1) .. (-5,0)
				.. controls +(0,-2) and +(-2,0) .. (0,-3)
				.. controls +(3,0) and +(0,-2) .. cycle
			}

			\draw[pattern = vertical lines, even odd rule]
				\domainboundarycoordinates

				(3,0) .. controls +(0,2) and +(1,0) .. (0,3.5)
				.. controls +(-2,0) and +(0,1) .. (-3,0)
				.. controls +(0,-1) and +(-2,0) .. (0,-2)
				.. controls +(1,0) and +(0,-2) .. cycle
			;

			\begin{scope}
				\clip
					(3,3) circle (1)
					(-2.403,4.109) circle (1)
					(-5,0) circle (1)
					(-1.962,-2.73) circle (1)
					(3.625,-2.25) circle (1)
				;
				\draw[line width = 1.5mm]
					\domainboundarycoordinates
				;
			\end{scope}

			\draw[dashed, fill = gray, fill opacity = 0.2]
				(4,0) .. controls +(0,1.5) and +(2,0) .. (0,4)
				.. controls +(-2,0) and +(0,1.5) .. (-3.8,0)
				.. controls +(0,-2) and +(-2,0) .. (0,-2.5)
				.. controls +(2,0) and +(0,-2) .. cycle
			;

			\begin{scope}[dotted]
				\draw[rotate around = {-45:(0.5,2)}]
					(0.5,1.5) .. controls +(0.1,0) and +(0,-0.1) .. (0.7,1.7)
					.. controls +(0,0.1) and +(0,-0.1) .. (0.6,2)
					.. controls +(0,0.1) and +(0,-0.1) .. (0.7,2.3)
					.. controls +(0,0.1) and +(0.1,0) .. (0.5,2.5)
					.. controls +(-0.1,0) and +(0,0.1) .. (0.3,2.3)
					.. controls +(0,-0.1) and +(0,0.1) .. (0.4,2)
					.. controls +(0,-0.1) and +(0,0.1) .. (0.3,1.7)
					.. controls +(0,-0.1) and +(-0.1,0) .. cycle
				;

				\draw
					(-1.5,-0.5) circle (0.5)
				;

				\draw
					(1.5,-1.5) .. controls +(1,0) and +(1,0) .. (2,-0.5)
					.. controls +(-0.2,0) and +(0.1,0.1) .. (1.7,-1)
					.. controls +(-0.2,-0.2) and +(0,0.5) .. (1,-1.3)
					.. controls +(0,-0.2) and +(-0.3,0) .. cycle
				;
			\end{scope}

			\node at (-2.65,4.35) [rotate = 45] {$\Gamma_0$};
			\node at (4.3,0) [rotate = -90] {$\Gamma_\text{int}$};
			\node at (-3.8,0) [node font = \LARGE] {$\mcV$};
			\node at (2.3, 1.7) [node font = \LARGE] {$U$};
		\end{tikzpicture}

		\caption{Example of possible configuration for the domain $\Omega$. The accessible part of the boundary $\Gamma_0$ is shown as thick parts on $\partial\Omega$. The support of the perturbation is here composed of three parts, delimited by dotted lines. This support does not touch $\mcV$, the tubular neighborhood where $\kappa$ is assumed to be known, represented here by the part filled with vertical lines. Finally, $\Gamma_\text{int}$ is an artificial boundary included in $\mcV$, delimiting the subdomain $U$ represented by the gray part.}
		\label{fig:configuration}
	\end{figure}

	\begin{remark}
		When using the iterated quasi-reversibility method to approximate the solution of the rewritten Cauchy problem~\eqref{eq:cauchy_F}, one can observe that the error can be too important to be neglected in high-frequency regimes. One possible workaround, if $\kappa_0$ is constant over $\mcV$, consists in considering $\bfF = \frac{1}{k\sqrt{\kappa_0}}\curl\bfE$ instead of simply $\bfF = \curl\bfE$. This yields the system
		\[
			\left\{
				\begin{aligned}
					\curl\bfF - k\sqrt{\kappa_0}\bfE &= 0, && \text{in } \mcV, \\
					\curl\bfE - k\sqrt{\kappa_0}\bfF &= 0, && \text{in } \mcV, \\
					\bfE \times \bfn &= \bfg_D, && \text{on } \Gamma_0, \\
					\bfF \times \bfn &= \frac{1}{k\sqrt{\kappa_0}}\bfg_N, && \text{on } \Gamma_0,
				\end{aligned}
			\right.
		\]
		offering a better precision.
	\end{remark}

	\section{Iterated sensitivity equation}
	\label{sec:sensitivity}

	From now on, for any admissible refractive index $\kappa$, we denote by $\bfE[\kappa]$ the solution of the direct problem
	\begin{equation}
		\label{eq:direct}
		\left\{
			\begin{aligned}
				\curl\curl\bfE[\kappa] - k^2\kappa\bfE[\kappa] &= 0, && \text{in } \Omega, \\
				\curl\bfE[\kappa] \times \bfn &= \bfg_N, && \text{on } \Gamma,
			\end{aligned}
		\right.
	\end{equation}
	where $\bfg_N$ is the Neumann trace of a given incident wave. In the rest of this paper, we adopt the following notation: for any couple $(a,b) \in (\R^+)^2$,
	\[a \lesssim b \iff (\exists C > 0 / a \leq Cb).\]

	\begin{proposition}
		\label{prop:solution_maxwell}
		Let $\bfF \in L^2(\Omega)^p$ and $\bfg \in Y(\Gamma)'$. The variational formulation of the problem
		\[
			\left\{
				\begin{aligned}
					\curl\curl\bfE - k^2\kappa\bfE &= \bfF, && \text{in } \Omega, \\
					\curl\bfE \times \bfn &= \bfg, && \text{on } \Gamma,
				\end{aligned}
			\right.
		\]
		reads
		\[
			\left\{
				\begin{aligned}
					&\text{Find } \bfE \in H(\curl) \text{ such that} \\
					&\dotprod{\curl\bfE}{\curl\bfphi} - k^2\dotprod{\kappa\bfE}{\bfphi} = \dotprod{\bfF}{\bfphi} + \duality{\bfg}{\bfphi_T}[\Gamma], \quad \forall \bfphi \in H(\curl),
				\end{aligned}
			\right.
		\]
		where $\dotprod{}{}$ denotes the dot product in $L^2(\Omega)^p$, $\duality{}{}[\Gamma]$ denotes the duality bracket from $Y(\Gamma)'$ to $Y(\Gamma)$ and $\bfphi_T$ is the continuous extension of the map $\bfphi \mapsto \bfn \times (\bfphi \times \bfn)$. This variational problem admits a unique solution $\bfE \in H(\curl)$, depending continuously on $\bfF$ and $\bfg$:
		\[\norm{\bfE}[H(\curl)] \lesssim \norm{\bfF}[0] + \norm{\bfg}[Y(\Gamma)'],\]
		where $\norm{}[0]$ denotes the $L^2(\Omega)^p$-norm.
	\end{proposition}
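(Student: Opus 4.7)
The plan is to derive the variational formulation by Green's formula and then apply the Lax--Milgram theorem, the coercivity of the resulting sesquilinear form being a consequence of the strict positivity of the conductivity enforced by \autoref{def:admissible}.

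To obtain the variational formulation, I would test the equation against the complex conjugate of an arbitrary $\bfphi \in H(\curl)$ and integrate over $\Omega$. Green's formula for the curl operator turns the principal term into $\dotprod{\curl\bfE}{\curl\bfphi}$ minus a boundary duality pairing on $\Gamma$, and this boundary term becomes $\duality{\bfg}{\bfphi_T}[\Gamma]$ thanks to the Neumann-type condition. Cauchy--Schwarz and the continuity of the map $\bfphi \mapsto \bfphi_T$ from $H(\curl)$ to $Y(\Gamma)$ then show that the right-hand side defines a continuous antilinear form on $H(\curl)$ whose norm is bounded by a constant times $\norm{\bfF}[0] + \norm{\bfg}[Y(\Gamma)']$.

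The main step, and the principal obstacle, is to establish coercivity of the sesquilinear form
\[
a(\bfE, \bfphi) \coloneqq \dotprod{\curl\bfE}{\curl\bfphi} - k^2 \dotprod{\kappa\bfE}{\bfphi},
\]
which is not directly coercive on $H(\curl)$ because of the wrong sign of the zero-order term. The idea is to exploit the strict positivity of the imaginary part of $\kappa$: by \autoref{def:admissible}, $\Im \kappa = \sigma/(\omega\eps_0) \geq \tilde{\sigma}/(\omega\eps_0) > 0$ uniformly in $\Omega$. Testing with $\bfphi = \bfE$ gives
\[
-\Im a(\bfE, \bfE) = \frac{k^2}{\omega\eps_0} \int_\Omega \sigma |\bfE|^2 \geq \frac{k^2 \tilde{\sigma}}{\omega\eps_0} \norm{\bfE}[0]^2,
\]
which controls $\norm{\bfE}[0]$, while $\Re a(\bfE, \bfE) = \norm{\curl\bfE}[0]^2 - k^2 \int_\Omega \Re(\kappa) |\bfE|^2$ controls $\norm{\curl\bfE}[0]$ up to a lower-order term already controlled by the preceding bound. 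A suitable complex linear combination $(\lambda + i) a(\bfE, \bfE)$ with $\lambda > 0$ small enough for the negative real-part contribution to be absorbed then yields a constant $c > 0$ with $|a(\bfE, \bfE)| \geq c \norm{\bfE}[H(\curl)]^2$.

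The Lax--Milgram theorem, applied with this coercivity estimate and the continuity of the right-hand side, provides the unique solution $\bfE \in H(\curl)$ together with the announced continuous dependence estimate. I expect the coercivity step to be the delicate point: it relies crucially on the strict positive lower bound on $\sigma$, and were $\sigma$ merely nonnegative, the form $a$ would only satisfy a G\aa{}rding-type inequality, forcing a Fredholm-alternative argument based on compactness results for subspaces of $H(\curl)$, a considerably more delicate route.
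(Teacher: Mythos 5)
Your proposal is correct and follows essentially the same route as the paper, which disposes of this proposition in a remark by invoking the coercivity of the sesquilinear form under \autoref{def:admissible} (citing \cite{HazardLenoir96} and \cite[Remark~1.6]{Heleine19}) and concluding by Lax--Milgram. Your explicit coercivity argument --- controlling $\norm{\bfE}[0]$ through $-\Im a(\bfE,\bfE)$ via the lower bound $\sigma \geq \tilde{\sigma} > 0$ and absorbing the zero-order term in $\Re a(\bfE,\bfE)$ by a suitable complex rotation --- is exactly the standard argument those references supply, and your closing observation about needing a Fredholm alternative when $\sigma$ merely vanishes on part of $\Omega$ matches the paper's own remark pointing to \cite{Monk03}.
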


	\begin{remark}
		Under the assumptions described in \autoref{def:admissible}, one can prove that the sesquilinear form involved in the variational formulation is coercive (see for example \cite{HazardLenoir96} or \cite[Remark~1.6]{Heleine19}), hence the result. It is possible to consider weaker assumptions on the refractive index, allowing for example the conductivity to be zero on some parts of $\Omega$ (as long as it is not zero everywhere), as shown in \cite{Monk03}. To remain as simple as possible in the statement of \autoref{thm:sensitivity} which follows, we chose to not consider such setting.
	\end{remark}

	We recall here that we are looking for an unknown refractive index which is assumed to be a perturbation of a known background index $\kappa_0$. We will see later that the idea of the inverse procedure we propose in this paper is to separate the informations from the support and amplitude of the perturbations. In this section, we then temporarily fix the support $D$ of this perturbation and then only its amplitude, $a \in \R$, is allowed to vary. We will then denote $\kappa_a$ the refractive index:
	\[\kappa_a \coloneqq \kappa_0(1 + a\chi_D),\]
	where $\chi_D$ is the characteristic function of $D$, for any $a \in \R$ such that $\kappa_a$ is still an admissible refractive index. The goal of this section is to provide a tool to study small variations around this amplitude. For any given real number $h$, we then introduce the perturbed refractive index
	\[\kappa_{a+h} \coloneqq \kappa_0(1 + (a + h)\chi_D) = \kappa_a + h\kappa_0\chi_D.\]
	To study these small variations, we introduce the application mapping an amplitude $a$ of perturbation to the corresponding electric field $\bfE[\kappa_a]$, solution of \eqref{eq:direct}:
	\[
		\begin{aligned}
			K \colon \R & \to H(\curl) \\
			a & \mapsto \bfE[\kappa_a].
		\end{aligned}
	\]

	\begin{theorem}
		\label{thm:sensitivity}
		Let $I \subset \R$ be an open interval such that $\kappa_a$ is admissible for all $a \in I$. Then $K \in \mcC^\infty(I; H(\curl))$. Moreover, if we denote, for any $a \in I$ and $n \in \N$, $\bfE_a^{(n)} \coloneqq K^{(n)}(a)$ the $n$-th derivative of $K$ at point $a$, then $\bfE_a^{(0)} = \bfE[\kappa_a]$ and, for all $n \in \N^*$, $\bfE_a^{(n)}$ is the unique solution of the iterated sensitivity equation:
		\begin{equation}
			\label{eq:sensitivity}
			\left\{
				\begin{aligned}
					&\text{Find } \bfE_a^{(n)} \in H(\curl) \text{ such that} \\
					&\dotprod{\curl \bfE_a^{(n)}}{\curl\bfphi} - k^2\dotprod{\kappa_a \bfE_a^{(n)}}{\bfphi} = nk^2\dotprod{\kappa_0\chi_D \bfE_a^{(n-1)}}{\bfphi}, \quad \forall \bfphi \in H(\curl).
				\end{aligned}
			\right.
			\tag{$\mcS$}
		\end{equation}
	\end{theorem}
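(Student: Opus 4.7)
The plan is to proceed by induction on $n \in \N$, invoking \autoref{prop:solution_maxwell} at every stage for existence, uniqueness and continuous dependence. The base case $n = 0$ is exactly that proposition applied to \eqref{eq:direct}, which yields $\bfE_a^{(0)} = \bfE[\kappa_a] \in H(\curl)$ for every $a \in I$. Local continuity of $K$ on $I$ follows by writing the two variational formulations of \eqref{eq:direct} at $a$ and $a+h$, subtracting, and applying the estimate of \autoref{prop:solution_maxwell} to the difference, whose right-hand side is of order $|h|$ in $L^2(\Omega)^p$ thanks to the identity $\kappa_{a+h} - \kappa_a = h\kappa_0\chi_D$.

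For the inductive step, assume $K$ is of class $\mcC^n$ on $I$ with derivatives $\bfE_a^{(j)}$, $1 \leq j \leq n$, satisfying \eqref{eq:sensitivity}. Fix $a \in I$ and $h$ small enough that $a+h \in I$. Writing \eqref{eq:sensitivity} at both $a$ and $a+h$ (or, for the first differentiation, writing \eqref{eq:direct} at both points), subtracting, expanding
\[
\kappa_{a+h}\bfE_{a+h}^{(n)} - \kappa_a\bfE_a^{(n)} = \kappa_a\bigl(\bfE_{a+h}^{(n)} - \bfE_a^{(n)}\bigr) + h\kappa_0\chi_D\bfE_{a+h}^{(n)},
\]
and dividing by $h$ shows that the difference quotient $\bigl(\bfE_{a+h}^{(n)} - \bfE_a^{(n)}\bigr)/h$ solves a variational problem of the type covered by \autoref{prop:solution_maxwell}, with coefficient $\kappa_a$, no boundary contribution, and right-hand side
\[
k^2\kappa_0\chi_D\,\bfE_{a+h}^{(n)} \;+\; nk^2\kappa_0\chi_D\,\frac{\bfE_{a+h}^{(n-1)} - \bfE_a^{(n-1)}}{h}.
\]

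Passing to the limit $h \to 0$, the inductive hypothesis gives $\bfE_{a+h}^{(n)} \to \bfE_a^{(n)}$ in $H(\curl)$ and convergence of the difference quotient of order $n-1$ also to $\bfE_a^{(n)}$; the source above therefore converges in $L^2(\Omega)^p$ to $(n+1)k^2\kappa_0\chi_D\bfE_a^{(n)}$. The continuity estimate in \autoref{prop:solution_maxwell} transfers this convergence to the difference quotient itself, whose limit is exactly the unique $H(\curl)$-solution of the $(n+1)$-th equation in \eqref{eq:sensitivity}. This identifies $K^{(n+1)}(a) = \bfE_a^{(n+1)}$, and continuity of $a \mapsto \bfE_a^{(n+1)}$ on $I$ follows from the same subtract-and-estimate argument used for the base case, applied now to the $(n+1)$-th sensitivity equation.

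The only subtlety I foresee is combinatorial rather than analytic: one must ensure that the two contributions to the right-hand side of the difference-quotient equation, one from the variation of the coefficient $\kappa_a$ and one from the variation of the source $\bfE_a^{(n-1)}$, combine to produce exactly the prefactor $n+1$ demanded by \eqref{eq:sensitivity} at the next order. Once this bookkeeping is settled, the analytic content is driven entirely by the $L^2$ continuous-dependence estimate of \autoref{prop:solution_maxwell}, which is why that proposition was stated with an $L^2$ volume source on the right-hand side.
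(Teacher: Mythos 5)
Your proposal is correct and follows essentially the same route as the paper: induction on the order of differentiation, the decomposition $\kappa_{a+h} = \kappa_a + h\kappa_0\chi_D$ to put the difference quotient into the framework of \autoref{prop:solution_maxwell}, and the observation that the coefficient variation and the source variation contribute $1$ and $n$ respectively to the prefactor $n+1$ in the next sensitivity equation. The only inessential difference is that the paper carries explicit $O(|h|)$ bounds (on the increment, the difference quotient, and a uniform bound on $\bfE_{a+h}^{(n)}$) as a three-part induction hypothesis, whereas you pass to the limit qualitatively via the same continuous-dependence estimate.
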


	\begin{proof}
		Let $a \in I$. As $I$ is an open interval, there exists $h_0 > 0$ such that $a + h \in I$ for all $h \in \interval[open]{-h_0}{h_0}$. In the following, $h$ will always be assumed to belong in this interval. We will prove by induction on $n \in \N^*$ that:
		\begin{equation}
			\label{hyp:induction_sensitivity}
			\norm{\bfE_{a+h}^{(n-1)} - \bfE_a^{(n-1)}}[H(\curl)] \lesssim \abs{h}, \quad \norm{\bfE_{a+h}^{(n-1)}}[H(\curl)] \lesssim \norm{\bfg_N}[\Gamma], \quad \norm{\tilde{\bfE}_a^{(n)} - \bfE_a^{(n)}}[H(\curl)] \lesssim \abs{h},
			\tag{$\mcH_n$}
		\end{equation}
		where:
		\[\forall n \in \N^*, \quad \tilde{\bfE}_a^{(n)} \coloneqq \frac{\bfE_{a+h}^{(n-1)} - \bfE_a^{(n-1)}}{h}.\]

		Let begin with the case $n = 1$. As $\bfE_a^{(0)} = \bfE[\kappa_a]$ is the weak solution of problem~\eqref{eq:direct}, it satisfies (according to \autoref{prop:solution_maxwell}):
		\begin{equation}
			\label{eq:Ea0}
			\dotprod{\curl\bfE_a^{(0)}}{\curl\bfphi} - k^2\dotprod{\kappa_a\bfE_a^{(0)}}{\bfphi} = \duality{\bfg_N}{\bfphi_T}[\Gamma], \quad \forall \bfphi \in H(\curl).
		\end{equation}
		We can as well write the variational problem satisfied by $\bfE_{a+h}^{(0)}$:
		\[\dotprod{\curl\bfE_{a+h}^{(0)}}{\curl\bfphi} - k^2\dotprod{\kappa_{a+h}\bfE_{a+h}^{(0)}}{\bfphi} = \duality{\bfg_N}{\bfphi_T}[\Gamma], \quad \forall \bfphi \in H(\curl),\]
		from which we can directly infer, as a consequence of \autoref{prop:solution_maxwell}, that $\norm{\bfE_{a+h}^{(0)}}[H(\curl)] \lesssim \norm{\bfg_N}[\Gamma]$. We rewrite this problem using the identity $\kappa_{a+h} = \kappa_a + h\kappa_0\chi_D$:
		\begin{equation}
			\label{eq:Eah0}
			\dotprod{\curl\bfE_{a+h}^{(0)}}{\curl\bfphi} - k^2\dotprod{\kappa_a\bfE_{a+h}^{(0)}}{\bfphi} = \duality{\bfg_N}{\bfphi_T}[\Gamma] + k^2h\dotprod{\kappa_0\chi_D\bfE_{a+h}^{(0)}}{\bfphi}, \quad \forall \bfphi \in H(\curl).
		\end{equation}
		Computing the difference between \eqref{eq:Eah0} and \eqref{eq:Ea0} yields:
		\begin{equation}
			\label{eq:diff_Eah0-Ea0}
			\dotprod{\curl(\bfE_{a+h}^{(0)} - \bfE_a^{(0)})}{\curl\bfphi} - k^2\dotprod{\kappa_a(\bfE_{a+h}^{(0)} - \bfE_a^{(0)})}{\bfphi} = k^2h\dotprod{\kappa_0\chi_D\bfE_{a+h}^{(0)}}{\bfphi}, \quad \forall \bfphi \in H(\curl).
		\end{equation}
		From \eqref{eq:diff_Eah0-Ea0} and \autoref{prop:solution_maxwell}, we obtain:
		\[\norm{\bfE_{a+h}^{(0)} - \bfE_a^{(0)}}[H(\curl)] \lesssim k^2\abs{h}\norm{\kappa_0\chi_D\bfE_{a+h}^{(0)}}[0] \lesssim \abs{h}\norm{\bfE_{a+h}^{(0)}}[H(\curl)] \lesssim \abs{h},\]
		which, in particular, shows that $K$ is continuous at point $a$. Now, from the definition of $\tilde{\bfE}_a^{(1)}$, using \eqref{eq:diff_Eah0-Ea0} and computing the difference with \eqref{eq:sensitivity} at rank $n = 1$, we get:
		\[\dotprod{\curl(\tilde{\bfE}_a^{(1)} - \bfE_a^{(1)})}{\curl\bfphi} - k^2\dotprod{\kappa_a(\tilde{\bfE}_a^{(1)} - \bfE_a^{(1)})}{\bfphi} = k^2\dotprod{\kappa_0\chi_D(\bfE_{a+h}^{(0)} - \bfE_a^{(0)})}{\bfphi}, \quad \forall \bfphi \in H(\curl),\]
		from which, once again using \autoref{prop:solution_maxwell}, we get:
		\[\norm{\tilde{\bfE}_a^{(1)} - \bfE_a^{(1)}}[H(\curl)] \lesssim \norm{\bfE_{a+h}^{(0)} - \bfE_a^{(0)}}[H(\curl)] \lesssim \abs{h}.\]
		This shows that
		\[\lim_{h \to 0} \frac{K(a+h) - K(a)}{h} = \bfE_a^{(1)}\]
		in $H(\curl)$.

		Now, let $n \in \N^*$ be such that \eqref{hyp:induction_sensitivity} is true. We will show that $K^{(n)}$ is still continuous and differentiable at point $a$. To this end, let us write the variational problems satisfied by $\bfE_a^{(n)}$ and $\bfE_{a+h}^{(n)}$:
		\begin{align*}
			\dotprod{\curl\bfE_a^{(n)}}{\curl\bfphi} - k^2\dotprod{\kappa_a\bfE_a^{(n)}}{\bfphi} &= nk^2\dotprod{\kappa_0\chi_D\bfE_a^{(n-1)}}{\bfphi}, \quad \forall \bfphi \in H(\curl), \\
			\dotprod{\curl\bfE_{a+h}^{(n)}}{\curl\bfphi} - k^2\dotprod{\kappa_{a+h}\bfE_{a+h}^{(n)}}{\bfphi} &= nk^2\dotprod{\kappa_0\chi_D\bfE_{a+h}^{(n-1)}}{\bfphi}, \quad \forall \bfphi \in H(\curl).
		\end{align*}
		In particular, this gives:
		\[\norm{\bfE_{a+h}^{(n)}}[H(\curl)] \lesssim \norm{\bfE_{a+h}^{(n-1)}}[H(\curl)] \lesssim \norm{\bfg_N}[\Gamma],\]
		using \autoref{prop:solution_maxwell} and \eqref{hyp:induction_sensitivity}. As in the case $n = 1$, we use the identity $\kappa_{a+h} = \kappa_a + h\kappa_0\chi_D$ to get:
		\begin{multline*}
			\dotprod{\curl(\bfE_{a+h}^{(n)} - \bfE_a^{(n)})}{\curl\bfphi} - k^2\dotprod{\kappa_a(\bfE_{a+h}^{(n)} - \bfE_a^{(n)})}{\bfphi} \\
			= nk^2\dotprod{\kappa_0\chi_D(\bfE_{a+h}^{(n-1)} - \bfE_a^{(n-1)})}{\bfphi} + k^2h\dotprod{\kappa_0\chi_D\bfE_{a+h}^{(n)}}{\bfphi}, \quad \forall \bfphi \in H(\curl).
		\end{multline*}
		Then, using \eqref{hyp:induction_sensitivity}:
		\[\norm{\bfE_{a+h}^{(n)} - \bfE_a^{(n)}}[H(\curl)] \lesssim \norm{\bfE_{a+h}^{(n-1)} - \bfE_a^{(n-1)}}[H(\curl)] + \abs{h}\norm{\bfE_{a+h}^{(n)}}[H(\curl)] \lesssim \abs{h},\]
		and then $K^{(n)}$ is continuous at point $a$. Using the definition of $\tilde{\bfE}_a^{(n+1)}$ and \eqref{eq:sensitivity} written at rank $n+1$, we get:
		\begin{multline*}
			\dotprod{\curl(\tilde{\bfE}_a^{(n+1)} - \bfE_a^{(n+1)})}{\curl\bfphi} - k^2\dotprod{\kappa_a(\tilde{\bfE}_a^{(n+1)} - \bfE_a^{(n+1)})}{\bfphi} \\
			= nk^2\dotprod{\kappa_0\chi_D(\tilde{\bfE}_a^{(n)} - \bfE_a^{(n)})}{\bfphi} + k^2\dotprod{\kappa_0\chi_D(\bfE_{a+h}^{(n)} - \bfE_a^{(n)})}{\bfphi}, \quad \forall \bfphi \in H(\curl).
		\end{multline*}
		Then, as usual, we use \autoref{prop:solution_maxwell} and \eqref{hyp:induction_sensitivity} to obtain
		\[\norm{\tilde{\bfE}_a^{(n+1)} - \bfE_a^{(n+1)}}[H(\curl)] \lesssim \abs{h},\]
		which concludes the proof.
	\end{proof}

	As we are interested in the tangential trace of the electric field, it is natural to also define the application mapping the amplitude of the perturbation to the trace of the corresponding field:
	\[
		\begin{aligned}
			K_t \colon \R & \to Y(\Gamma) \\
			a & \mapsto \gamma_t(\bfE[\kappa_a]).
		\end{aligned}
	\]
	The following result is a direct consequence of \autoref{thm:sensitivity} and of the fact that $\gamma_t$ is linear and continuous from $H(\curl)$ to $Y(\Gamma)$.

	\begin{corollary}
		\label{cor:Kt}
		Let $I \subset \R$ be an open interval such that $\kappa_a$ is admissible for all $a \in I$. Then $K_t \in \mcC^\infty(I; Y(\Gamma))$. Moreover, for any $a \in I$ and $n \in \N$, we have $K_t^{(n)}(a) = \gamma_t \circ K^{(n)}(a)$.
	\end{corollary}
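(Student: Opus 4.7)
The plan is to view $K_t$ as the composition $\gamma_t \circ K$ and exploit the fact that composition with a continuous linear map commutes with differentiation. The substance of the corollary has already been done in \autoref{thm:sensitivity}; what remains is a one-line functorial argument.

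First I would recall that, by the definition of the tangential trace given in \autoref{sec:uniqueness}, the map $\gamma_t \colon H(\curl) \to Y(\Gamma)$ is linear and continuous. For any $a \in I$ we have $K_t(a) = \gamma_t(\bfE[\kappa_a]) = (\gamma_t \circ K)(a)$.

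Next I would establish the base step: $K_t$ is differentiable at every $a \in I$ with $K_t'(a) = \gamma_t(K'(a))$. Indeed, by \autoref{thm:sensitivity}, $K$ is differentiable at $a$ in $H(\curl)$, so
\[
\frac{K_t(a+h) - K_t(a)}{h} = \gamma_t\!\left(\frac{K(a+h) - K(a)}{h}\right) \xrightarrow[h \to 0]{} \gamma_t(K'(a)) \quad \text{in } Y(\Gamma),
\]
by linearity and continuity of $\gamma_t$. Moreover, continuity of $\gamma_t$ also ensures that $a \mapsto \gamma_t(K'(a))$ inherits continuity from $K'$, so $K_t \in \mcC^1(I; Y(\Gamma))$.

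Finally I would conclude by induction on $n$: assuming $K_t^{(n)} = \gamma_t \circ K^{(n)}$, the very same argument applied to $K^{(n)}$ (which is itself differentiable with derivative $K^{(n+1)}$ by \autoref{thm:sensitivity}) yields $K_t^{(n+1)} = \gamma_t \circ K^{(n+1)}$, together with the continuity of this map. Since $K \in \mcC^\infty(I; H(\curl))$, this gives $K_t \in \mcC^\infty(I; Y(\Gamma))$ with the claimed formula for the derivatives. There is no real obstacle here; the only thing to be careful about is that all differentiability statements are understood in the Fréchet sense in the Banach space $H(\curl)$ (resp.\ $Y(\Gamma)$), which is precisely what \autoref{thm:sensitivity} provides.
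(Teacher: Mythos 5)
Your proposal is correct and follows exactly the route the paper takes: the paper gives no separate proof but states the corollary as ``a direct consequence of \autoref{thm:sensitivity} and of the fact that $\gamma_t$ is linear and continuous from $H(\curl)$ to $Y(\Gamma)$,'' which is precisely the functorial argument you spell out. Your inductive write-up merely makes explicit what the paper leaves to the reader.
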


	\section{A linearized cost functional}
	\label{sec:cost}

	We recall that our goal in this paper is to reconstruct a refractive index $\kappa_\text{ex}$. When $\Omega$ is illuminated by a wave of trace $\bfg_N$, the electric field $\bfE[\kappa_\text{ex}]$ propagating inside $\Omega$ is solution of problem~\eqref{eq:direct}. In the context of our inverse problem, $\bfE[\kappa_\text{ex}]$ is unknown in $\Omega$. Instead, we have access to its tangential trace $\bfE[\kappa_\text{ex}] \times \bfn$ on $\Gamma_0$, the accessible part of the boundary. To solve this inverse problem, a natural approach is to find $\kappa$ minimizing the cost functional
	\[\mcJ \colon \kappa \mapsto \frac{1}{2}\norm{\bfE[\kappa_\text{ex}] \times \bfn - \bfE[\kappa] \times \bfn}[\Gamma_0]^2,\]
	measuring the distance between the measurements and the predictions of the direct problem. Unfortunately, trying to minimize $\mcJ$ without any regularization leads to poor results (see for example \cite[Section~6.3]{Heleine19}). To regularize this functional, we will first recall that we are assuming that $\kappa_\text{ex}$ is a perturbation of a known background refractive index $\kappa_0$. The only hypothesis about the support of the perturbation is that it is contained outside a given tubular neighborhood of the domain's boundary. Then, the support of this perturbation, denoted $D_\text{ex}$, is unknown, as well as its amplitude $a_\text{ex} \in \R$. With these notations, we have:
	\[\kappa_\text{ex} = \kappa_0(1 + a_\text{ex}\chi_{D_\text{ex}}).\]
	Our goal is then to retrieve $a_\text{ex}$ and $D_\text{ex}$, which is then equivalent to the reconstruction of $\kappa_\text{ex}$.

	The applications $K$ and $K_t$ defined in \autoref{sec:sensitivity} can actually be defined for any support of perturbation $D$. To highlight this dependency, we rename them to $K_D$ and $K_{D,t}$: each choice of support defines a new couple of applications. This notation allows us to define a first regularization of $\mcJ$ where we are explicitly looking for a perturbation of $\kappa_0$:
	\[\mcJ_r \colon (D, a) \mapsto \frac{1}{2}\norm{\bfE[\kappa_\text{ex}] \times \bfn - K_{D,t}(a)}[\Gamma_0]^2.\]
	This naturally regularizes the cost functional by reducing the size of the set of allowed refractive indices. However, evaluating $\mcJ_r$ still requires solving problem~\eqref{eq:direct}, each time with a different refractive index. This means that, each time we evaluate $\mcJ_r$ numerically, we have a new matrix to invert, which can lead to high computational cost and time. We propose here a way to work around this issue by assuming that the amplitude of the perturbation is such that $\abs{a} < 1$. Then, we can use \autoref{cor:Kt}. For any fixed support $D$, as $K_{D,t}$ is $\mcC^\infty$, it admits a Taylor expansion at any order $N \in \N$:
	\[K_{D,t}(a) = \sum_{n=0}^N \frac{a^n}{n!}K_{D,t}^{(n)}(0) + o(a^N).\]
	It is then possible to choose $N \in \N^*$ large enough so that $o(a^N)$ can be neglected and:
	\[K_{D,t}(a) \approx K_{D,t}(0) + \sum_{n=1}^N \frac{a^n}{n!}K_{D,t}^{(n)}(0).\]
	The idea is then simply to replace the term $K_{D,t}(a)$ in the expression of $\mcJ_r$ by this truncated expansion:
	\[J \colon (D,a) \mapsto \frac{1}{2}\norm*{\bfE[\kappa_\text{ex}] \times \bfn - \bfE[\kappa_0] \times \bfn - \sum_{n=1}^N \frac{a^n}{n!}K_{D,t}^{(n)}(0)}[\Gamma_0]^2.\]
	As the background refractive index $\kappa_0$ is assumed to be known, $\bfE[\kappa_0]$ can always be computed and we can equivalently say that our entry data in our inverse problem is $\delta\bfE \times \bfn \coloneqq \bfE[\kappa_\text{ex}] \times \bfn - \bfE[\kappa_0] \times \bfn$. One can notice that the system required to be solved to compute the derivatives $K_{D,t}^{(n)}(0)$, for $n \in \N^*$ is the same as the one for $\bfE[\kappa_0]$, that is problem~\eqref{eq:direct} with the background refractive index $\kappa_0$, for any support $D$ or amplitude $a$. It is then faster to evaluate numerically $J$ than $\mcJ_r$, as long as a factorization (e.g. an LU decomposition) of the matrix is stored.

	\section{Numerical results}
	\label{sec:numerics}

	\subsection{A complete inversion procedure}

	In this section, we test the reconstruction of a refractive index $\kappa_\text{ex}$ in different configurations. We assume that $\kappa_\text{ex}$ is a perturbation of a known background refractive index $\kappa_0$. In the first tests, the support of the perturbation $D_\text{ex}$ is assumed to be a ball contained in $\Omega \setminus \bar{\mcV}$ where $\mcV$ will be a given tubular neighborhood of $\Gamma$. The amplitude of the perturbation is a real constant $a_\text{ex}$. With these notations, we are then looking for $D_\text{ex}$ and $a_\text{ex}$ such that:
	\[\kappa_\text{ex} = \kappa_0(1 + a_\text{ex}\chi_{D_\text{ex}}).\]
	When the support of the perturbation is assumed to be a ball, it is determined by two parameters: its center, denoted $\bfx_\text{ex}$ and its radius, denoted $r_\text{ex}$.

	The data is generated by solving problem~\eqref{eq:direct} with refractive index $\kappa_\text{ex}$, and with $\bfg$ being the Neumann trace of a plane wave $\bfE_\bfeta$ of direction $\bfeta \in \R^p$ (unitary):
	\[\bfE_\bfeta \colon \bfx \mapsto \bfeta^\perp e^{ik\sqrt{\kappa_0}\bfeta \cdot \bfx},\]
	where $\bfeta^\perp \in \R^p$ is a unitary vector orthogonal to $\bfeta$. Once the electric field $\bfE[\kappa_\text{ex}]$ is generated, its tangential trace $\bfE[\kappa_\text{ex}] \times \bfn$ is computed on a given part $\Gamma_0$ of the boundary. The entry data for the inverse procedure is the collection of the tangential traces obtained from plane waves of different directions $\bfeta$.

	As a first step, we transmit these partial data to a boundary $\Gamma_\text{int}$ defined inside $\mcV$, using the iterated quasi-reversibility method described in \autoref{sec:transmission}. More precisely, we transmit the difference field to get an approximation of $\restriction{(\bfE[\kappa_\text{ex}] - \bfE[\kappa_0])}{\Gamma_\text{int}} \times \bfn$, used in the rest of the algorithm to reconstruct the perturbation.

	In order to reduce the number of unknowns, we use a result from \cite{DarbasHeleineLohrengel_sens}: for $\bfz \in \Gamma_\text{int}$, the quantity $\abs{((\bfE[\kappa_\text{ex}] - \bfE[\kappa_0]) \times \bfn)(\bfz)}$ decreases as the distance between $\bfz$ and $\bfx_\text{ex}$ increases. In other words, this modulus is controled by a surface peak that can be localized to obtain an approximation of $\hat{\bfx}_\text{ex}$, the projection of $\bfx_\text{ex}$ on $\Gamma_\text{int}$. Denoting $\hat{\bfn}$ the normal at this point, we then have:
	\[\bfx_\text{ex} = \hat{\bfx}_\text{ex} - d_\text{ex}\hat{\bfn},\]
	where $d_\text{ex} > 0$ is the depth of the perturbation.

	Having an approximation of $\hat{\bfx}_\text{ex}$ allows to reduce the number of parameters to retrieve to three scalars: the depth of the perturbation, its radius, and its amplitude. For $d > 0$, we note $\bfx(d) = \hat{\bfx} - d\hat{\bfn}$. Let's denote $D(d,r)$ the ball centered at $\bfx(d)$ and of radius $r$. As a final step of the algorithm, we will then minimize the functional
	\[j \colon (d,r,a) \mapsto J(D(d,r), a).\]
	In all tests, we minimize in two steps. Let us introduce the functional
	\[\tilde{j} \colon (d,r) \mapsto \min_a j(d, r, a).\]
	In other words, for a given support of perturbation, we are looking for the optimal amplitude, that is the amplitude allowing us to best approach the boundary data. As the parameters $d$ and $r$ are fixed, the functional $j(d, r, \cdot)$ is a simple function from $\R$ to $\R$ and can then be minimized using classical methods. Here, we chose to use a golden section method~\cite{Kiefer53}. To minimize the functional $\tilde{j}$ which is defined from $\R^2$ to $\R$, we chose to apply Powell's method~\cite{Powell64}.

	All PDEs are solved with FreeFem++~\cite{Hecht12}, using edge finite elements of order 1 (see \cite{Nedelec86}). The meshes are generated with Gmsh~\cite{GeuzaineRemacke09}. All examples can be reproduced using the code available in the dedicated Git repository~\cite{Heleine23}.

	\subsection{Unit disk}
	\label{sec:numerics_unit_disk}

	We begin with a simple case, in two dimensions ($p = 2$). Here, $\Omega$ is the unit disk. The tubular neighborhood $\mcV$ where it is assumed that there is no perturbations is the annulus of outer radius 1 and of inner radius $0.7$. The accessible part of the boundary, $\Gamma_0$, is the union of 32 patches equally distributed on $\Gamma$. We add an artificial boundary $\Gamma_\text{int}$ inside $\mcV$ where the partial data will be transmitted: it is chosen as the circle centered at the origin and of radius $0.8$. Then, $U$ is the disk centered at the origin and of radius $0.8$. The perturbation that we will want to retrieve is located in the disk centered at point $(-0.4,0)$ and of radius $0.2$. This configuration is illustrated in \autoref{fig:configuration_disk}.

	\begin{figure}[hbt]
		\centering

		\begin{tikzpicture}[scale = 3]
			\newcommand{\domainboundarycoordinates}{
				(0,0) circle (1)
			}

			\draw[fill = gray, fill opacity = 0.1, even odd rule]
				\domainboundarycoordinates

				(0,0) circle (0.7)
			;

			\begin{scope}
				\clip
					(1,0) circle (0.075)
					(0.9807852804032304,0.19509032201612825) circle (0.075)
					(0.9238795325112867,0.3826834323650898) circle (0.075)
					(0.8314696123025452,0.5555702330196022) circle (0.075)
					(0.7071067811865476,0.7071067811865475) circle (0.075)
					(0.5555702330196023,0.8314696123025452) circle (0.075)
					(0.38268343236508984,0.9238795325112867) circle (0.075)
					(0.19509032201612833,0.9807852804032304) circle (0.075)
					(0,1) circle (0.075)
					(-0.1950903220161282,0.9807852804032304) circle (0.075)
					(-0.3826834323650897,0.9238795325112867) circle (0.075)
					(-0.555570233019602,0.8314696123025455) circle (0.075)
					(-0.7071067811865475,0.7071067811865476) circle (0.075)
					(-0.8314696123025453,0.5555702330196022) circle (0.075)
					(-0.9238795325112867,0.3826834323650899) circle (0.075)
					(-0.9807852804032304,0.1950903220161286) circle (0.075)
					(-1,0) circle (0.075)
					(-0.9807852804032304,-0.19509032201612836) circle (0.075)
					(-0.9238795325112868,-0.38268343236508967) circle (0.075)
					(-0.8314696123025455,-0.555570233019602) circle (0.075)
					(-0.7071067811865477,-0.7071067811865475) circle (0.075)
					(-0.5555702330196022,-0.8314696123025452) circle (0.075)
					(-0.38268343236509034,-0.9238795325112865) circle (0.075)
					(-0.19509032201612866,-0.9807852804032303) circle (0.075)
					(0,-1) circle (0.075)
					(0.1950903220161283,-0.9807852804032304) circle (0.075)
					(0.38268343236509,-0.9238795325112866) circle (0.075)
					(0.5555702330196018,-0.8314696123025455) circle (0.075)
					(0.7071067811865474,-0.7071067811865477) circle (0.075)
					(0.8314696123025452,-0.5555702330196022) circle (0.075)
					(0.9238795325112865,-0.3826834323650904) circle (0.075)
					(0.9807852804032303,-0.19509032201612872) circle (0.075)
				;
				\draw[line width = 0.7mm]
					\domainboundarycoordinates
				;
			\end{scope}

			\draw[dashed]
				(0,0) circle (0.8)
			;

			\draw[dotted]
				(-0.4,0) circle (0.2)
			;

			\node at (0,0.87) {$\Gamma_\text{int}$};
			\node at (0,-0.83) [node font = \LARGE] {$\mcV$};
			\node at (-0.4,0) {$D$};
		\end{tikzpicture}

		\caption{Configuration of the domain $\Omega$ for the first numerical test. The thicker parts on the boundary is $\Gamma_0$. The artificial boundary is the dashed line, while the dotted line represents the boundary of the support of the perturbation.}
		\label{fig:configuration_disk}
	\end{figure}

	We fix the physical constants $\omega$, $\mu_0$ and $\eps_0$ to 1. The background permittivity and conductivity are also set to 1. Then, the background refractive index is here $\kappa_0 \coloneqq 1 + i$. The refractive index that we will want to retrieve is a perturbation of this background index:
	\[\kappa_\text{ex} \coloneqq \kappa_0(1 + a_\text{ex}\chi_{D_\text{ex}}),\]
	where the support $D_\text{ex}$ is the disk centered at $\bfx_\text{ex} \coloneqq (-0.4,0)$ and of radius $r_\text{ex} \coloneqq 0.2$, and $a_\text{ex} \coloneqq 0.1$ is the amplitude. We find that the exact projection of the perturbation on $\Gamma_\text{int}$ is $\hat{\bfx}_\text{ex} = (-0.8,0)$. Then, the exact depth is $d_\text{ex} = 0.4$.

	The synthetic data are generated on a mesh of the unit disk $\Omega$ of size \scnum{0.0137998}, using \num{74525} triangles (\num{37584} vertices and \num{112108} edges). We use eight incident waves, of directions $\bfeta_m \coloneqq (\cos\theta_m,\sin\theta_m)$ with $\theta_m \coloneqq 2\pi\frac{m}{8}$ for $0 \leq m \leq 7$. The data transmission problem is solved on a mesh of the known neighborhood $\mcV$ of size \scnum{0.0200013}, using \num{18190} triangles (\num{9466} vertices and \num{27656} edges). The cost function $j$ is minimized on a mesh of the unit disk of size \scnum{0.0408549}, using \num{8691} triangles (\num{4459} vertices and \num{13149} edges). In \autoref{tab:2d_unit}, we summarize the results obtained in this configuration. The reconstructed refractive index is shown in \autoref{fig:reconstruction_2d_unit}.

	\begin{table}[hbt]
		\centering

		\begin{tabular}{c|ccc}
			\hline
			Parameter & Exact value & Approximation & Relative error \\
			\hline
			Center & (\num{-0.4}, \num{0}) & (\rounded[5]{-0.3723636117}, \rounded[5]{0.004823660273}) & \scnum[5]{0.07013547857083893} \\
			Radius & \num{0.2} & \rounded[5]{0.2115693343} & \scnum[5]{0.05784667149999992} \\
			Amplitude & \num{0.1} & \rounded[5]{0.09142762072} & \scnum[5]{0.0857237928} \\
			\hline
		\end{tabular}

		\caption{Reconstruction of a spherical perturbation in the unit disk in 2D, with unitary physical parameters.}
		\label{tab:2d_unit}
	\end{table}

	\begin{figure}[hbt]
		\centering
		\includegraphics[width=0.5\textwidth]{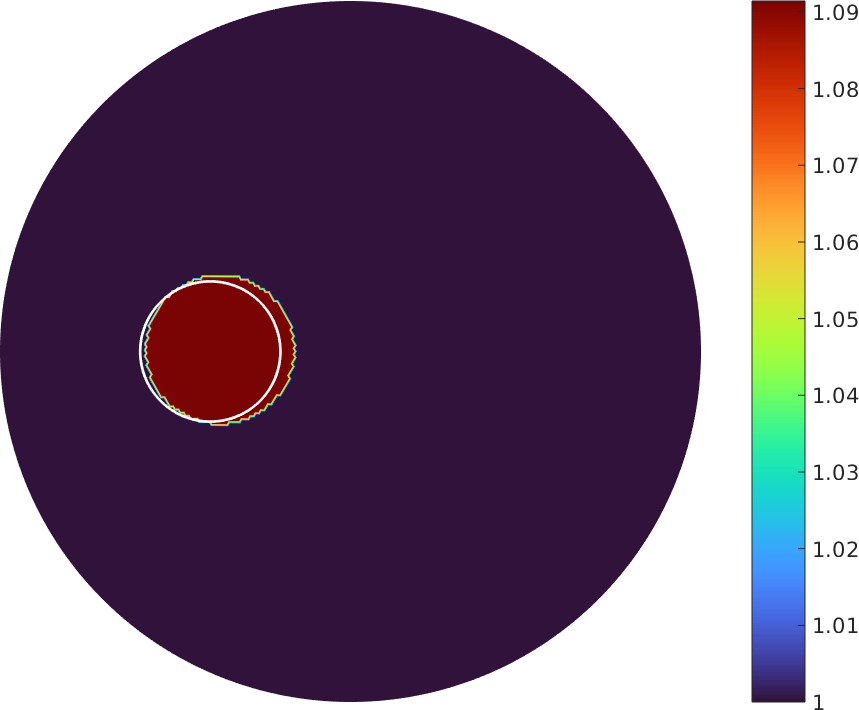}
		\caption{Real part of the reconstructed refractive index in the unit disk. The boundary of the exact support of the perturbation is shown as a white line.}
		\label{fig:reconstruction_2d_unit}
	\end{figure}

	\subsection{Influence of the amplitude}

	To determine the influence of the choice of amplitude over the results, we propose to test the same configuration as described in the previous section, but with different values of exact amplitude. We vary here the amplitude by taking values between \num{-0.3} and \num{0.3}, with a step of \num{0.05}, excluding zero. It is interesting to notice that the reconstruction of the support does not seem to be affected by the choice of amplitude. Indeed, the mean value of the reconstructed depths is \rounded[5]{0.42781628786666664}, with a standard variation of \rounded[5]{0.0003676384507637249}, and the mean value of the reconstructed radii is \rounded[5]{0.2132786510916667}, with a standard variation of \rounded[5]{0.0017723571690481182}. In \autoref{tab:different_amplitudes}, we list the values of the reconstructed amplitude for each case. We can observe that the error is always of same order: the mean value for this error is \rounded[2]{9.451839231111116}\%, with a standard variation of \rounded[2]{1.1448245492810216}\%. An interesting detail to notice is that the difference $a - a_\text{ex}$ is always of sign opposite to the sign of the exact amplitude. In other words, the absolute value of the amplitude is always under-estimated.

	\begin{table}[hbt]
		\centering

		\begin{tabular}{rrrr}
			\hline
			$a_\text{ex}$ & $a$ & $a - a_\text{ex}$ & $\frac{\abs{a - a_\text{ex}}}{\abs{a_\text{ex}}}$ \\
			\hline
			\num{-0.30} & \rounded[5]{-0.2695219108} & \rounded[5]{0.030478089200000003} & \rounded[5]{10.159363066666668}\% \\
			\num{-0.25} & \rounded[5]{-0.227478294} & \rounded[5]{0.022521706000000002} & \rounded[5]{9.008682400000001}\% \\
			\num{-0.20} & \rounded[5]{-0.1819287314} & \rounded[5]{0.018071268600000023} & \rounded[5]{9.035634300000012}\% \\
			\num{-0.15} & \rounded[5]{-0.1349982906} & \rounded[5]{0.015001709400000007} & \rounded[5]{10.001139600000005}\% \\
			\num{-0.10} & \rounded[5]{-0.09256767694} & \rounded[5]{0.0074323230600000095} & \rounded[5]{7.43232306000001}\% \\
			\num{-0.05} & \rounded[5]{-0.04519009318} & \rounded[5]{0.004809906820000001} & \rounded[5]{9.619813640000002}\% \\
			\num{0.05} & \rounded[5]{0.04445695574} & \rounded[5]{-0.005543044260000005} & \rounded[5]{11.08608852000001}\% \\
			\num{0.10} & \rounded[5]{0.09142825286} & \rounded[5]{-0.008571747140000002} & \rounded[5]{8.571747140000003}\% \\
			\num{0.15} & \rounded[5]{0.1325225288} & \rounded[5]{-0.017477471199999983} & \rounded[5]{11.651647466666656}\% \\
			\num{0.20} & \rounded[5]{0.1824419814} & \rounded[5]{-0.017558018600000003} & \rounded[5]{8.779009300000002}\% \\
			\num{0.25} & \rounded[5]{0.2253353438} & \rounded[5]{-0.024664656199999996} & \rounded[5]{9.865862479999999}\% \\
			\num{0.30} & \rounded[5]{0.2753677206} & \rounded[5]{-0.024632279400000012} & \rounded[5]{8.210759800000005}\% \\
			\hline
		\end{tabular}

		\caption{Behavior of the reconstructed amplitude $a$ with respect to the exact amplitude $a_\text{ex}$.}
		\label{tab:different_amplitudes}
	\end{table}

	\subsection{3D case}
	\label{sec:numerics_unit_ball}

	We now propose a 3D equivalent of the previous case. The domain $\Omega$ is the unit ball. The known neighborhood $\mcV$ is the annulus of outer radius 1 and inner radius $0.8$. The accessible part $\Gamma_0$ is the union of 102 patches distributed on the whole boundary $\Gamma$. This is illustrated in \autoref{fig:configuration_3d}. Inside $\mcV$, we define the artificial boundary $\Gamma_\text{int}$ as the sphere centered at the origin and of radius $0.9$. The perturbation is located in the ball centered at point $(-0.4,0,0)$ and of radius $0.2$.

	\begin{figure}[hbt]
		\centering

		\begin{tikzpicture}[scale = 3, 3d view = {0}{0}]
			\begin{scope}[
				plane origin = {(0,0,-0.98875)},
				plane x = {(0,1,-0.98875)},
				plane y = {(1,0,-0.98875)},
				canvas is plane
			]
				\draw[fill = gray!20] (0,0) circle (0.14957753006384342);
			\end{scope}
			\begin{scope}[
				plane origin = {(0.3381724167,0,-0.9291210788)},
				plane x = {(0.3381724167,1,-0.9291210788)},
				plane y = {(1.2778650375,0,-0.5871009355)},
				canvas is plane
			]
				\draw[fill = gray!20] (0,0) circle (0.14957753006384342);
			\end{scope}
			\begin{scope}[
				plane origin = {(0.1045010238,0.3216210805,-0.9291210788)},
				plane x = {(0.068144988,1.2672386594,-0.6058787034)},
				plane y = {(1.0982353217,0.3216210805,-0.8173528132)},
				canvas is plane
			]
				\draw[fill = gray!20] (0,0) circle (0.14957753006384342);
			\end{scope}
			\begin{scope}[
				plane origin = {(-0.2735872322,0.1987727592,-0.9291210788)},
				plane x = {(-0.2168016693,1.1783569422,-0.7362733972)},
				plane y = {(0.6856898132,0.1987727592,-1.2115879798)},
				canvas is plane
			]
				\draw[fill = gray!20] (0,0) circle (0.14957753006384342);
			\end{scope}
			\begin{scope}[
				plane origin = {(-0.2735872322,-0.1987727592,-0.9291210788)},
				plane x = {(-0.3303727951,0.7808114237,-1.1219687604)},
				plane y = {(0.6856898132,-0.1987727592,-1.2115879798)},
				canvas is plane
			]
				\draw[fill = gray!20] (0,0) circle (0.14957753006384342);
			\end{scope}
			\begin{scope}[
				plane origin = {(0.1045010238,-0.3216210805,-0.9291210788)},
				plane x = {(0.1408570595,0.6239964983,-1.2523634542)},
				plane y = {(1.0982353217,-0.3216210805,-0.8173528132)},
				canvas is plane
			]
				\draw[fill = gray!20] (0,0) circle (0.14957753006384342);
			\end{scope}
			\begin{scope}[
				plane origin = {(0.6355562491,0,-0.7574264431)},
				plane x = {(0.6355562491,1,-0.7574264431)},
				plane y = {(1.4016006922,0,-0.1146388334)},
				canvas is plane
			]
				\draw[fill = gray!20] (0,0) circle (0.14957753006384342);
			\end{scope}
			\begin{scope}[
				plane origin = {(0.5346639398,0.3436076501,-0.7574264431)},
				plane x = {(0.3342541021,1.2812812514,-0.4735178059)},
				plane y = {(1.3516267316,0.3436076501,-0.1807360609)},
				canvas is plane
			]
				\draw[fill = gray!20] (0,0) circle (0.14957753006384342);
			\end{scope}
			\begin{scope}[
				plane origin = {(0.2640196074,0.578122299,-0.7574264431)},
				plane x = {(0.0715649248,1.38937177,-0.2053073519)},
				plane y = {(1.2082968865,0.578122299,-0.4282753833)},
				canvas is plane
			]
				\draw[fill = gray!20] (0,0) circle (0.14957753006384342);
			\end{scope}
			\begin{scope}[
				plane origin = {(-0.0904490848,0.6290872028,-0.7574264432)},
				plane x = {(-0.0150070553,1.4005743075,-0.1256700443)},
				plane y = {(0.9024961485,0.6290872028,-0.8760003171)},
				canvas is plane
			]
				\draw[fill = gray!20] (0,0) circle (0.14957753006384342);
			\end{scope}
			\begin{scope}[
				plane origin = {(-0.4162008318,0.4803213647,-0.7574264431)},
				plane x = {(-0.1822568358,1.3543988858,-0.3316815737)},
				plane y = {(0.4602024901,0.4803213647,-1.2390042846)},
				canvas is plane
			]
				\draw[fill = gray!20] (0,0) circle (0.14957753006384342);
			\end{scope}
			\begin{scope}[
				plane origin = {(-0.6098117553,0.1790568871,-0.7574264431)},
				plane x = {(-0.496244119,1.1625226424,-0.6163679443)},
				plane y = {(0.1691115971,0.1790568871,-1.3845455794)},
				canvas is plane
			]
				\draw[fill = gray!20] (0,0) circle (0.14957753006384342);
			\end{scope}
			\begin{scope}[
				plane origin = {(-0.6098117553,-0.1790568871,-0.7574264431)},
				plane x = {(-0.7233793916,0.8044088682,-0.898484942)},
				plane y = {(0.1691115971,-0.1790568871,-1.3845455794)},
				canvas is plane
			]
				\draw[fill = gray!20] (0,0) circle (0.14957753006384342);
			\end{scope}
			\begin{scope}[
				plane origin = {(-0.4162008318,-0.4803213647,-0.7574264431)},
				plane x = {(-0.6501448278,0.3937561564,-1.1831713125)},
				plane y = {(0.4602024901,-0.4803213647,-1.2390042846)},
				canvas is plane
			]
				\draw[fill = gray!20] (0,0) circle (0.14957753006384342);
			\end{scope}
			\begin{scope}[
				plane origin = {(-0.0904490848,-0.6290872028,-0.7574264432)},
				plane x = {(-0.1658911143,0.1423999018,-1.389182842)},
				plane y = {(0.9024961485,-0.6290872028,-0.8760003171)},
				canvas is plane
			]
				\draw[fill = gray!20] (0,0) circle (0.14957753006384342);
			\end{scope}
			\begin{scope}[
				plane origin = {(0.2640196074,-0.578122299,-0.7574264431)},
				plane x = {(0.4564742901,0.233127172,-1.3095455344)},
				plane y = {(1.2082968865,-0.578122299,-0.4282753833)},
				canvas is plane
			]
				\draw[fill = gray!20] (0,0) circle (0.14957753006384342);
			\end{scope}
			\begin{scope}[
				plane origin = {(0.5346639398,-0.3436076501,-0.7574264431)},
				plane x = {(0.7350737776,0.5940659512,-1.0413350804)},
				plane y = {(1.3516267316,-0.3436076501,-0.1807360609)},
				canvas is plane
			]
				\draw[fill = gray!20] (0,0) circle (0.14957753006384342);
			\end{scope}
			\begin{scope}[
				plane origin = {(0.856282618,0,-0.494375)},
				plane x = {(0.856282618,1,-0.494375)},
				plane y = {(1.356282618,0,0.3716504038)},
				canvas is plane
			]
				\draw[fill = gray!20] (0,0) circle (0.14957753006384342);
			\end{scope}
			\begin{scope}[
				plane origin = {(0.7911019848,0.3276851713,-0.494375)},
				plane x = {(0.5100536593,1.271170753,-0.3187424411)},
				plane y = {(1.3210517898,0.3276851713,0.3536540114)},
				canvas is plane
			]
				\draw[fill = gray!20] (0,0) circle (0.14957753006384342);
			\end{scope}
			\begin{scope}[
				plane origin = {(0.6054832458,0.6054832458,-0.494375)},
				plane x = {(0.1311415968,1.3960526608,-0.1070766654)},
				plane y = {(1.2379387778,0.6054832458,0.2802216692)},
				canvas is plane
			]
				\draw[fill = gray!20] (0,0) circle (0.14957753006384342);
			\end{scope}
			\begin{scope}[
				plane origin = {(0.3276851713,0.7911019848,-0.494375)},
				plane x = {(-0.1143579064,1.3909644333,0.1725305108)},
				plane y = {(1.1612095923,0.7911019848,0.0581076147)},
				canvas is plane
			]
				\draw[fill = gray!20] (0,0) circle (0.14957753006384342);
			\end{scope}
			\begin{scope}[
				plane origin = {(0,0.856282618,-0.494375)},
				plane x = {(0,1.356282618,0.3716504038)},
				plane y = {(1,0.856282618,-0.494375)},
				canvas is plane
			]
				\draw[fill = gray!20] (0,0) circle (0.14957753006384342);
			\end{scope}
			\begin{scope}[
				plane origin = {(-0.3276851713,0.7911019848,-0.494375)},
				plane x = {(0.1143579064,1.3909644333,0.1725305108)},
				plane y = {(0.5058392497,0.7911019848,-1.0468576148)},
				canvas is plane
			]
				\draw[fill = gray!20] (0,0) circle (0.14957753006384342);
			\end{scope}
			\begin{scope}[
				plane origin = {(-0.6054832458,0.6054832458,-0.494375)},
				plane x = {(-0.1311415968,1.3960526608,-0.1070766654)},
				plane y = {(0.0269722862,0.6054832458,-1.2689716692)},
				canvas is plane
			]
				\draw[fill = gray!20] (0,0) circle (0.14957753006384342);
			\end{scope}
			\begin{scope}[
				plane origin = {(-0.7911019848,0.3276851713,-0.494375)},
				plane x = {(-0.5100536593,1.271170753,-0.3187424411)},
				plane y = {(-0.2611521799,0.3276851713,-1.3424040114)},
				canvas is plane
			]
				\draw[fill = gray!20] (0,0) circle (0.14957753006384342);
			\end{scope}
			\begin{scope}[
				plane origin = {(-0.856282618,0,-0.494375)},
				plane x = {(-0.856282618,1,-0.494375)},
				plane y = {(-0.356282618,0,-1.3604004038)},
				canvas is plane
			]
				\draw[fill = gray!20] (0,0) circle (0.14957753006384342);
			\end{scope}
			\begin{scope}[
				plane origin = {(-0.7911019848,-0.3276851713,-0.494375)},
				plane x = {(-1.0721503104,0.6158004105,-0.6700075589)},
				plane y = {(-0.2611521799,-0.3276851713,-1.3424040114)},
				canvas is plane
			]
				\draw[fill = gray!20] (0,0) circle (0.14957753006384342);
			\end{scope}
			\begin{scope}[
				plane origin = {(-0.6054832458,-0.6054832458,-0.494375)},
				plane x = {(-1.0798248948,0.1850861692,-0.8816733346)},
				plane y = {(0.0269722862,-0.6054832458,-1.2689716692)},
				canvas is plane
			]
				\draw[fill = gray!20] (0,0) circle (0.14957753006384342);
			\end{scope}
			\begin{scope}[
				plane origin = {(-0.3276851713,-0.7911019848,-0.494375)},
				plane x = {(-0.769728249,-0.1912395364,-1.1612805108)},
				plane y = {(0.5058392497,-0.7911019848,-1.0468576148)},
				canvas is plane
			]
				\draw[fill = gray!20] (0,0) circle (0.14957753006384342);
			\end{scope}
			\begin{scope}[
				plane origin = {(0,-0.856282618,-0.494375)},
				plane x = {(0,-0.356282618,-1.3604004038)},
				plane y = {(1,-0.856282618,-0.494375)},
				canvas is plane
			]
				\draw[fill = gray!20] (0,0) circle (0.14957753006384342);
			\end{scope}
			\begin{scope}[
				plane origin = {(0.3276851713,-0.7911019848,-0.494375)},
				plane x = {(0.769728249,-0.1912395364,-1.1612805108)},
				plane y = {(1.1612095923,-0.7911019848,0.0581076147)},
				canvas is plane
			]
				\draw[fill = gray!20] (0,0) circle (0.14957753006384342);
			\end{scope}
			\begin{scope}[
				plane origin = {(0.6054832458,-0.6054832458,-0.494375)},
				plane x = {(1.0798248948,0.1850861692,-0.8816733346)},
				plane y = {(1.2379387778,-0.6054832458,0.2802216692)},
				canvas is plane
			]
				\draw[fill = gray!20] (0,0) circle (0.14957753006384342);
			\end{scope}
			\begin{scope}[
				plane origin = {(0.7911019848,-0.3276851713,-0.494375)},
				plane x = {(1.0721503104,0.6158004105,-0.6700075589)},
				plane y = {(1.3210517898,-0.3276851713,0.3536540114)},
				canvas is plane
			]
				\draw[fill = gray!20] (0,0) circle (0.14957753006384342);
			\end{scope}
			\begin{scope}[
				plane origin = {(0.9737286658,0,-0.1716946357)},
				plane x = {(0.9737286658,1,-0.1716946357)},
				plane y = {(1.1473768435,0,0.8131131173)},
				canvas is plane
			]
				\draw[fill = gray!20] (0,0) circle (0.14957753006384342);
			\end{scope}
			\begin{scope}[
				plane origin = {(0.9150056419,0.3330348178,-0.1716946357)},
				plane x = {(0.5839592062,1.2746024113,-0.1095760054)},
				plane y = {(1.0994301897,0.3330348178,0.8111520391)},
				canvas is plane
			]
				\draw[fill = gray!20] (0,0) circle (0.14957753006384342);
			\end{scope}
			\begin{scope}[
				plane origin = {(0.7459194335,0.6259007216,-0.1716946357)},
				plane x = {(0.1290284106,1.4000343451,-0.0296995694)},
				plane y = {(0.9702323589,0.6259007216,0.8028225327)},
				canvas is plane
			]
				\draw[fill = gray!20] (0,0) circle (0.14957753006384342);
			\end{scope}
			\begin{scope}[
				plane origin = {(0.4868643328,0.843273761,-0.1716946357)},
				plane x = {(-0.3174549248,1.3653994807,0.1119517368)},
				plane y = {(0.8194435779,0.843273761,0.7713806772)},
				canvas is plane
			]
				\draw[fill = gray!20] (0,0) circle (0.14957753006384342);
			\end{scope}
			\begin{scope}[
				plane origin = {(0.1690862083,0.9589355394,-0.1716946356)},
				plane x = {(-0.5114295796,1.2026528638,0.5193192052)},
				plane y = {(0.8815845015,0.9589355394,0.5299792075)},
				canvas is plane
			]
				\draw[fill = gray!20] (0,0) circle (0.14957753006384342);
			\end{scope}
			\begin{scope}[
				plane origin = {(-0.1690862083,0.9589355394,-0.1716946356)},
				plane x = {(0.5114295796,1.2026528638,0.5193192052)},
				plane y = {(0.5434120849,0.9589355394,-0.8733684788)},
				canvas is plane
			]
				\draw[fill = gray!20] (0,0) circle (0.14957753006384342);
			\end{scope}
			\begin{scope}[
				plane origin = {(-0.4868643328,0.843273761,-0.1716946357)},
				plane x = {(0.3174549248,1.3653994807,0.1119517368)},
				plane y = {(-0.1542850877,0.843273761,-1.1147699485)},
				canvas is plane
			]
				\draw[fill = gray!20] (0,0) circle (0.14957753006384342);
			\end{scope}
			\begin{scope}[
				plane origin = {(-0.7459194335,0.6259007216,-0.1716946357)},
				plane x = {(-0.1290284106,1.4000343451,-0.0296995694)},
				plane y = {(-0.5216065081,0.6259007216,-1.1462118041)},
				canvas is plane
			]
				\draw[fill = gray!20] (0,0) circle (0.14957753006384342);
			\end{scope}
			\begin{scope}[
				plane origin = {(-0.9150056419,0.3330348178,-0.1716946357)},
				plane x = {(-0.5839592062,1.2746024113,-0.1095760054)},
				plane y = {(-0.7305810941,0.3330348178,-1.1545413105)},
				canvas is plane
			]
				\draw[fill = gray!20] (0,0) circle (0.14957753006384342);
			\end{scope}
			\begin{scope}[
				plane origin = {(-0.9737286658,0,-0.1716946357)},
				plane x = {(-0.9737286658,1,-0.1716946357)},
				plane y = {(-0.8000804881,0,-1.1565023887)},
				canvas is plane
			]
				\draw[fill = gray!20] (0,0) circle (0.14957753006384342);
			\end{scope}
			\begin{scope}[
				plane origin = {(-0.9150056419,-0.3330348178,-0.1716946357)},
				plane x = {(-1.2460520776,0.6085327757,-0.233813266)},
				plane y = {(-0.7305810941,-0.3330348178,-1.1545413105)},
				canvas is plane
			]
				\draw[fill = gray!20] (0,0) circle (0.14957753006384342);
			\end{scope}
			\begin{scope}[
				plane origin = {(-0.7459194335,-0.6259007216,-0.1716946357)},
				plane x = {(-1.3628104564,0.1482329018,-0.313689702)},
				plane y = {(-0.5216065081,-0.6259007216,-1.1462118041)},
				canvas is plane
			]
				\draw[fill = gray!20] (0,0) circle (0.14957753006384342);
			\end{scope}
			\begin{scope}[
				plane origin = {(-0.4868643328,-0.843273761,-0.1716946357)},
				plane x = {(-1.2911835905,-0.3211480414,-0.4553410082)},
				plane y = {(-0.1542850877,-0.843273761,-1.1147699485)},
				canvas is plane
			]
				\draw[fill = gray!20] (0,0) circle (0.14957753006384342);
			\end{scope}
			\begin{scope}[
				plane origin = {(-0.1690862083,-0.9589355394,-0.1716946356)},
				plane x = {(-0.8496019962,-0.715218215,-0.8627084765)},
				plane y = {(0.5434120849,-0.9589355394,-0.8733684788)},
				canvas is plane
			]
				\draw[fill = gray!20] (0,0) circle (0.14957753006384342);
			\end{scope}
			\begin{scope}[
				plane origin = {(0.1690862083,-0.9589355394,-0.1716946356)},
				plane x = {(0.8496019962,-0.715218215,-0.8627084765)},
				plane y = {(0.8815845015,-0.9589355394,0.5299792075)},
				canvas is plane
			]
				\draw[fill = gray!20] (0,0) circle (0.14957753006384342);
			\end{scope}
			\begin{scope}[
				plane origin = {(0.4868643328,-0.843273761,-0.1716946357)},
				plane x = {(1.2911835905,-0.3211480414,-0.4553410082)},
				plane y = {(0.8194435779,-0.843273761,0.7713806772)},
				canvas is plane
			]
				\draw[fill = gray!20] (0,0) circle (0.14957753006384342);
			\end{scope}
			\begin{scope}[
				plane origin = {(0.7459194335,-0.6259007216,-0.1716946357)},
				plane x = {(1.3628104564,0.1482329018,-0.313689702)},
				plane y = {(0.9702323589,-0.6259007216,0.8028225327)},
				canvas is plane
			]
				\draw[fill = gray!20] (0,0) circle (0.14957753006384342);
			\end{scope}
			\begin{scope}[
				plane origin = {(0.9150056419,-0.3330348178,-0.1716946357)},
				plane x = {(1.2460520776,0.6085327757,-0.233813266)},
				plane y = {(1.0994301897,-0.3330348178,0.8111520391)},
				canvas is plane
			]
				\draw[fill = gray!20] (0,0) circle (0.14957753006384342);
			\end{scope}
			\begin{scope}[
				plane origin = {(0.9737286658,0,0.1716946357)},
				plane x = {(0.9737286658,1,0.1716946357)},
				plane y = {(0.8000804881,0,1.1565023887)},
				canvas is plane
			]
				\draw[fill = gray!20] (0,0) circle (0.14957753006384342);
			\end{scope}
			\begin{scope}[
				plane origin = {(0.9150056419,0.3330348178,0.1716946357)},
				plane x = {(0.5839592062,1.2746024113,0.1095760054)},
				plane y = {(0.7305810941,0.3330348178,1.1545413105)},
				canvas is plane
			]
				\draw[fill = gray!20] (0,0) circle (0.14957753006384342);
			\end{scope}
			\begin{scope}[
				plane origin = {(0.7459194335,0.6259007216,0.1716946357)},
				plane x = {(0.1290284106,1.4000343451,0.0296995694)},
				plane y = {(0.5216065081,0.6259007216,1.1462118041)},
				canvas is plane
			]
				\draw[fill = gray!20] (0,0) circle (0.14957753006384342);
			\end{scope}
			\begin{scope}[
				plane origin = {(0.4868643328,0.843273761,0.1716946357)},
				plane x = {(-0.3174549248,1.3653994807,-0.1119517368)},
				plane y = {(0.1542850877,0.843273761,1.1147699485)},
				canvas is plane
			]
				\draw[fill = gray!20] (0,0) circle (0.14957753006384342);
			\end{scope}
			\begin{scope}[
				plane origin = {(0.1690862083,0.9589355394,0.1716946356)},
				plane x = {(-0.5114295796,1.2026528638,-0.5193192052)},
				plane y = {(-0.5434120849,0.9589355394,0.8733684788)},
				canvas is plane
			]
				\draw[fill = gray!20] (0,0) circle (0.14957753006384342);
			\end{scope}
			\begin{scope}[
				plane origin = {(-0.1690862083,0.9589355394,0.1716946356)},
				plane x = {(0.5114295796,1.2026528638,-0.5193192052)},
				plane y = {(-0.8815845015,0.9589355394,-0.5299792075)},
				canvas is plane
			]
				\draw[fill = gray!20] (0,0) circle (0.14957753006384342);
			\end{scope}
			\begin{scope}[
				plane origin = {(-0.4868643328,0.843273761,0.1716946357)},
				plane x = {(0.3174549248,1.3653994807,-0.1119517368)},
				plane y = {(-0.8194435779,0.843273761,-0.7713806772)},
				canvas is plane
			]
				\draw[fill = gray!20] (0,0) circle (0.14957753006384342);
			\end{scope}
			\begin{scope}[
				plane origin = {(-0.7459194335,0.6259007216,0.1716946357)},
				plane x = {(-0.1290284106,1.4000343451,0.0296995694)},
				plane y = {(-0.9702323589,0.6259007216,-0.8028225327)},
				canvas is plane
			]
				\draw[fill = gray!20] (0,0) circle (0.14957753006384342);
			\end{scope}
			\begin{scope}[
				plane origin = {(-0.9150056419,0.3330348178,0.1716946357)},
				plane x = {(-0.5839592062,1.2746024113,0.1095760054)},
				plane y = {(-1.0994301897,0.3330348178,-0.8111520391)},
				canvas is plane
			]
				\draw[fill = gray!20] (0,0) circle (0.14957753006384342);
			\end{scope}
			\begin{scope}[
				plane origin = {(-0.9737286658,0,0.1716946357)},
				plane x = {(-0.9737286658,1,0.1716946357)},
				plane y = {(-1.1473768435,0,-0.8131131173)},
				canvas is plane
			]
				\draw[fill = gray!20] (0,0) circle (0.14957753006384342);
			\end{scope}
			\begin{scope}[
				plane origin = {(-0.9150056419,-0.3330348178,0.1716946357)},
				plane x = {(-1.2460520776,0.6085327757,0.233813266)},
				plane y = {(-1.0994301897,-0.3330348178,-0.8111520391)},
				canvas is plane
			]
				\draw[fill = gray!20] (0,0) circle (0.14957753006384342);
			\end{scope}
			\begin{scope}[
				plane origin = {(-0.7459194335,-0.6259007216,0.1716946357)},
				plane x = {(-1.3628104564,0.1482329018,0.313689702)},
				plane y = {(-0.9702323589,-0.6259007216,-0.8028225327)},
				canvas is plane
			]
				\draw[fill = gray!20] (0,0) circle (0.14957753006384342);
			\end{scope}
			\begin{scope}[
				plane origin = {(-0.4868643328,-0.843273761,0.1716946357)},
				plane x = {(-1.2911835905,-0.3211480414,0.4553410082)},
				plane y = {(-0.8194435779,-0.843273761,-0.7713806772)},
				canvas is plane
			]
				\draw[fill = gray!20] (0,0) circle (0.14957753006384342);
			\end{scope}
			\begin{scope}[
				plane origin = {(-0.1690862083,-0.9589355394,0.1716946356)},
				plane x = {(-0.8496019962,-0.715218215,0.8627084765)},
				plane y = {(-0.8815845015,-0.9589355394,-0.5299792075)},
				canvas is plane
			]
				\draw[fill = gray!20] (0,0) circle (0.14957753006384342);
			\end{scope}
			\begin{scope}[
				plane origin = {(0.1690862083,-0.9589355394,0.1716946356)},
				plane x = {(0.8496019962,-0.715218215,0.8627084765)},
				plane y = {(-0.5434120849,-0.9589355394,0.8733684788)},
				canvas is plane
			]
				\draw[fill = gray!20] (0,0) circle (0.14957753006384342);
			\end{scope}
			\begin{scope}[
				plane origin = {(0.4868643328,-0.843273761,0.1716946357)},
				plane x = {(1.2911835905,-0.3211480414,0.4553410082)},
				plane y = {(0.1542850877,-0.843273761,1.1147699485)},
				canvas is plane
			]
				\draw[fill = gray!20] (0,0) circle (0.14957753006384342);
			\end{scope}
			\begin{scope}[
				plane origin = {(0.7459194335,-0.6259007216,0.1716946357)},
				plane x = {(1.3628104564,0.1482329018,0.313689702)},
				plane y = {(0.5216065081,-0.6259007216,1.1462118041)},
				canvas is plane
			]
				\draw[fill = gray!20] (0,0) circle (0.14957753006384342);
			\end{scope}
			\begin{scope}[
				plane origin = {(0.9150056419,-0.3330348178,0.1716946357)},
				plane x = {(1.2460520776,0.6085327757,0.233813266)},
				plane y = {(0.7305810941,-0.3330348178,1.1545413105)},
				canvas is plane
			]
				\draw[fill = gray!20] (0,0) circle (0.14957753006384342);
			\end{scope}
			\begin{scope}[
				plane origin = {(0.856282618,0,0.494375)},
				plane x = {(0.856282618,1,0.494375)},
				plane y = {(0.356282618,0,1.3604004038)},
				canvas is plane
			]
				\draw[fill = gray!20] (0,0) circle (0.14957753006384342);
			\end{scope}
			\begin{scope}[
				plane origin = {(0.7911019848,0.3276851713,0.494375)},
				plane x = {(0.5100536593,1.271170753,0.3187424411)},
				plane y = {(0.2611521799,0.3276851713,1.3424040114)},
				canvas is plane
			]
				\draw[fill = gray!20] (0,0) circle (0.14957753006384342);
			\end{scope}
			\begin{scope}[
				plane origin = {(0.6054832458,0.6054832458,0.494375)},
				plane x = {(0.1311415968,1.3960526608,0.1070766654)},
				plane y = {(-0.0269722862,0.6054832458,1.2689716692)},
				canvas is plane
			]
				\draw[fill = gray!20] (0,0) circle (0.14957753006384342);
			\end{scope}
			\begin{scope}[
				plane origin = {(0.3276851713,0.7911019848,0.494375)},
				plane x = {(-0.1143579064,1.3909644333,-0.1725305108)},
				plane y = {(-0.5058392497,0.7911019848,1.0468576148)},
				canvas is plane
			]
				\draw[fill = gray!20] (0,0) circle (0.14957753006384342);
			\end{scope}
			\begin{scope}[
				plane origin = {(0,0.856282618,0.494375)},
				plane x = {(0,1.356282618,-0.3716504038)},
				plane y = {(-1,0.856282618,0.494375)},
				canvas is plane
			]
				\draw[fill = gray!20] (0,0) circle (0.14957753006384342);
			\end{scope}
			\begin{scope}[
				plane origin = {(-0.3276851713,0.7911019848,0.494375)},
				plane x = {(0.1143579064,1.3909644333,-0.1725305108)},
				plane y = {(-1.1612095923,0.7911019848,-0.0581076147)},
				canvas is plane
			]
				\draw[fill = gray!20] (0,0) circle (0.14957753006384342);
			\end{scope}
			\begin{scope}[
				plane origin = {(-0.6054832458,0.6054832458,0.494375)},
				plane x = {(-0.1311415968,1.3960526608,0.1070766654)},
				plane y = {(-1.2379387778,0.6054832458,-0.2802216692)},
				canvas is plane
			]
				\draw[fill = gray!20] (0,0) circle (0.14957753006384342);
			\end{scope}
			\begin{scope}[
				plane origin = {(-0.7911019848,0.3276851713,0.494375)},
				plane x = {(-0.5100536593,1.271170753,0.3187424411)},
				plane y = {(-1.3210517898,0.3276851713,-0.3536540114)},
				canvas is plane
			]
				\draw[fill = gray!20] (0,0) circle (0.14957753006384342);
			\end{scope}
			\begin{scope}[
				plane origin = {(-0.856282618,0,0.494375)},
				plane x = {(-0.856282618,1,0.494375)},
				plane y = {(-1.356282618,0,-0.3716504038)},
				canvas is plane
			]
				\draw[fill = gray!20] (0,0) circle (0.14957753006384342);
			\end{scope}
			\begin{scope}[
				plane origin = {(-0.7911019848,-0.3276851713,0.494375)},
				plane x = {(-1.0721503104,0.6158004105,0.6700075589)},
				plane y = {(-1.3210517898,-0.3276851713,-0.3536540114)},
				canvas is plane
			]
				\draw[fill = gray!20] (0,0) circle (0.14957753006384342);
			\end{scope}
			\begin{scope}[
				plane origin = {(-0.6054832458,-0.6054832458,0.494375)},
				plane x = {(-1.0798248948,0.1850861692,0.8816733346)},
				plane y = {(-1.2379387778,-0.6054832458,-0.2802216692)},
				canvas is plane
			]
				\draw[fill = gray!20] (0,0) circle (0.14957753006384342);
			\end{scope}
			\begin{scope}[
				plane origin = {(-0.3276851713,-0.7911019848,0.494375)},
				plane x = {(-0.769728249,-0.1912395364,1.1612805108)},
				plane y = {(-1.1612095923,-0.7911019848,-0.0581076147)},
				canvas is plane
			]
				\draw[fill = gray!20] (0,0) circle (0.14957753006384342);
			\end{scope}
			\begin{scope}[
				plane origin = {(0,-0.856282618,0.494375)},
				plane x = {(0,-0.356282618,1.3604004038)},
				plane y = {(-1,-0.856282618,0.494375)},
				canvas is plane
			]
				\draw[fill = gray!20] (0,0) circle (0.14957753006384342);
			\end{scope}
			\begin{scope}[
				plane origin = {(0.3276851713,-0.7911019848,0.494375)},
				plane x = {(0.769728249,-0.1912395364,1.1612805108)},
				plane y = {(-0.5058392497,-0.7911019848,1.0468576148)},
				canvas is plane
			]
				\draw[fill = gray!20] (0,0) circle (0.14957753006384342);
			\end{scope}
			\begin{scope}[
				plane origin = {(0.6054832458,-0.6054832458,0.494375)},
				plane x = {(1.0798248948,0.1850861692,0.8816733346)},
				plane y = {(-0.0269722862,-0.6054832458,1.2689716692)},
				canvas is plane
			]
				\draw[fill = gray!20] (0,0) circle (0.14957753006384342);
			\end{scope}
			\begin{scope}[
				plane origin = {(0.7911019848,-0.3276851713,0.494375)},
				plane x = {(1.0721503104,0.6158004105,0.6700075589)},
				plane y = {(0.2611521799,-0.3276851713,1.3424040114)},
				canvas is plane
			]
				\draw[fill = gray!20] (0,0) circle (0.14957753006384342);
			\end{scope}
			\begin{scope}[
				plane origin = {(0.6355562491,0,0.7574264431)},
				plane x = {(0.6355562491,1,0.7574264431)},
				plane y = {(-0.130488194,0,1.4002140528)},
				canvas is plane
			]
				\draw[fill = gray!20] (0,0) circle (0.14957753006384342);
			\end{scope}
			\begin{scope}[
				plane origin = {(0.5346639398,0.3436076501,0.7574264431)},
				plane x = {(0.3342541021,1.2812812514,0.4735178059)},
				plane y = {(-0.2822988519,0.3436076501,1.3341168253)},
				canvas is plane
			]
				\draw[fill = gray!20] (0,0) circle (0.14957753006384342);
			\end{scope}
			\begin{scope}[
				plane origin = {(0.2640196074,0.578122299,0.7574264431)},
				plane x = {(0.0715649248,1.38937177,0.2053073519)},
				plane y = {(-0.6802576716,0.578122299,1.086577503)},
				canvas is plane
			]
				\draw[fill = gray!20] (0,0) circle (0.14957753006384342);
			\end{scope}
			\begin{scope}[
				plane origin = {(-0.0904490848,0.6290872028,0.7574264432)},
				plane x = {(-0.0150070553,1.4005743075,0.1256700443)},
				plane y = {(-1.0833943181,0.6290872028,0.6388525692)},
				canvas is plane
			]
				\draw[fill = gray!20] (0,0) circle (0.14957753006384342);
			\end{scope}
			\begin{scope}[
				plane origin = {(-0.4162008318,0.4803213647,0.7574264431)},
				plane x = {(-0.1822568358,1.3543988858,0.3316815737)},
				plane y = {(-1.2926041537,0.4803213647,0.2758486017)},
				canvas is plane
			]
				\draw[fill = gray!20] (0,0) circle (0.14957753006384342);
			\end{scope}
			\begin{scope}[
				plane origin = {(-0.6098117553,0.1790568871,0.7574264431)},
				plane x = {(-0.496244119,1.1625226424,0.6163679443)},
				plane y = {(-1.3887351077,0.1790568871,0.1303073069)},
				canvas is plane
			]
				\draw[fill = gray!20] (0,0) circle (0.14957753006384342);
			\end{scope}
			\begin{scope}[
				plane origin = {(-0.6098117553,-0.1790568871,0.7574264431)},
				plane x = {(-0.7233793916,0.8044088682,0.898484942)},
				plane y = {(-1.3887351077,-0.1790568871,0.1303073069)},
				canvas is plane
			]
				\draw[fill = gray!20] (0,0) circle (0.14957753006384342);
			\end{scope}
			\begin{scope}[
				plane origin = {(-0.4162008318,-0.4803213647,0.7574264431)},
				plane x = {(-0.6501448278,0.3937561564,1.1831713125)},
				plane y = {(-1.2926041537,-0.4803213647,0.2758486017)},
				canvas is plane
			]
				\draw[fill = gray!20] (0,0) circle (0.14957753006384342);
			\end{scope}
			\begin{scope}[
				plane origin = {(-0.0904490848,-0.6290872028,0.7574264432)},
				plane x = {(-0.1658911143,0.1423999018,1.389182842)},
				plane y = {(-1.0833943181,-0.6290872028,0.6388525692)},
				canvas is plane
			]
				\draw[fill = gray!20] (0,0) circle (0.14957753006384342);
			\end{scope}
			\begin{scope}[
				plane origin = {(0.2640196074,-0.578122299,0.7574264431)},
				plane x = {(0.4564742901,0.233127172,1.3095455344)},
				plane y = {(-0.6802576716,-0.578122299,1.086577503)},
				canvas is plane
			]
				\draw[fill = gray!20] (0,0) circle (0.14957753006384342);
			\end{scope}
			\begin{scope}[
				plane origin = {(0.5346639398,-0.3436076501,0.7574264431)},
				plane x = {(0.7350737776,0.5940659512,1.0413350804)},
				plane y = {(-0.2822988519,-0.3436076501,1.3341168253)},
				canvas is plane
			]
				\draw[fill = gray!20] (0,0) circle (0.14957753006384342);
			\end{scope}
			\begin{scope}[
				plane origin = {(0.3381724167,0,0.9291210788)},
				plane x = {(0.3381724167,1,0.9291210788)},
				plane y = {(-0.6015202041,0,1.2711412221)},
				canvas is plane
			]
				\draw[fill = gray!20] (0,0) circle (0.14957753006384342);
			\end{scope}
			\begin{scope}[
				plane origin = {(0.1045010238,0.3216210805,0.9291210788)},
				plane x = {(0.068144988,1.2672386594,0.6058787034)},
				plane y = {(-0.8892332741,0.3216210805,1.0408893444)},
				canvas is plane
			]
				\draw[fill = gray!20] (0,0) circle (0.14957753006384342);
			\end{scope}
			\begin{scope}[
				plane origin = {(-0.2735872322,0.1987727592,0.9291210788)},
				plane x = {(-0.2168016693,1.1783569422,0.7362733972)},
				plane y = {(-1.2328642776,0.1987727592,0.6466541778)},
				canvas is plane
			]
				\draw[fill = gray!20] (0,0) circle (0.14957753006384342);
			\end{scope}
			\begin{scope}[
				plane origin = {(-0.2735872322,-0.1987727592,0.9291210788)},
				plane x = {(-0.3303727951,0.7808114237,1.1219687604)},
				plane y = {(-1.2328642776,-0.1987727592,0.6466541778)},
				canvas is plane
			]
				\draw[fill = gray!20] (0,0) circle (0.14957753006384342);
			\end{scope}
			\begin{scope}[
				plane origin = {(0.1045010238,-0.3216210805,0.9291210788)},
				plane x = {(0.1408570595,0.6239964983,1.2523634542)},
				plane y = {(-0.8892332741,-0.3216210805,1.0408893444)},
				canvas is plane
			]
				\draw[fill = gray!20] (0,0) circle (0.14957753006384342);
			\end{scope}
			\begin{scope}[
				plane origin = {(0,0,0.98875)},
				plane x = {(0,1,0.98875)},
				plane y = {(-1,0,0.98875)},
				canvas is plane
			]
				\draw[fill = gray!20] (0,0) circle (0.14957753006384342);
			\end{scope}

			\begin{scope}[
				canvas is xz plane at y = 0
			]
				\shade[ball color = lightgray, opacity = 0.5] (0,0,0) circle (1);
			\end{scope}
		\end{tikzpicture}

		\caption{Configuration of the domain $\Omega$ in 3D. The accessible part $\Gamma_0$ is represented by the patches on the sphere.}
		\label{fig:configuration_3d}
	\end{figure}

	As in the 2D case, the physical constants are all fixed to 1, leading to a background refractive index equal to $\kappa_0 = 1 + i$. The amplitude of the perturbation is given by $a_\text{ex} \coloneqq 0.2$.

	Data are generated on a mesh of the unit ball $\Omega$ of size \scnum{0.0795939}, using \num{480175} tetrahedrons (\num{83619} vertices and \num{577278} edges). We use six incident waves of directions
	\[\bfeta \in \{(\pm 1,0,0), (0,\pm 1,0), (0,0,\pm 1)\}.\]
	The data transmission problem is solved on a mesh of $\mcV$ of size \scnum{0.0904978}, using \num{160156} tetrahedrons (\num{34520} vertices and \num{211511} edges). The cost function $j$ is minimized on a mesh of the unit ball of size \scnum{0.271142}, using \num{28020} tetrahedrons (\num{5605} vertices and \num{35653} edges). Results are summarized in \autoref{tab:3d_unit} and illustrated in \autoref{fig:reconstruction_3d}.

	\begin{table}[hbt]
		\centering

		\begin{tabular}{c|ccc}
			\hline
			Parameter & Exact value & Approximation & Relative error \\
			\hline
			Center & (\num{-0.4}, \num{0}, \num{0}) & (\rounded[5]{-0.35166079}, \rounded[5]{-0.03888007913}, \rounded[5]{-0.02515070436}) & \scnum[5]{0.1673487695350508} \\
			Radius & \num{0.2} & \rounded[5]{0.1932578736} & \scnum[5]{0.0337106320000001} \\
			Amplitude & \num{0.2} & \rounded[5]{0.2175963333} & \scnum[5]{0.08798166649999994} \\
			\hline
		\end{tabular}

		\caption{Reconstruction of a spherical perturbation in the unit ball in 3D, with unitary physical parameters.}
		\label{tab:3d_unit}
	\end{table}

	\begin{figure}[hbt]
		\centering
		\includegraphics[width=0.6\textwidth]{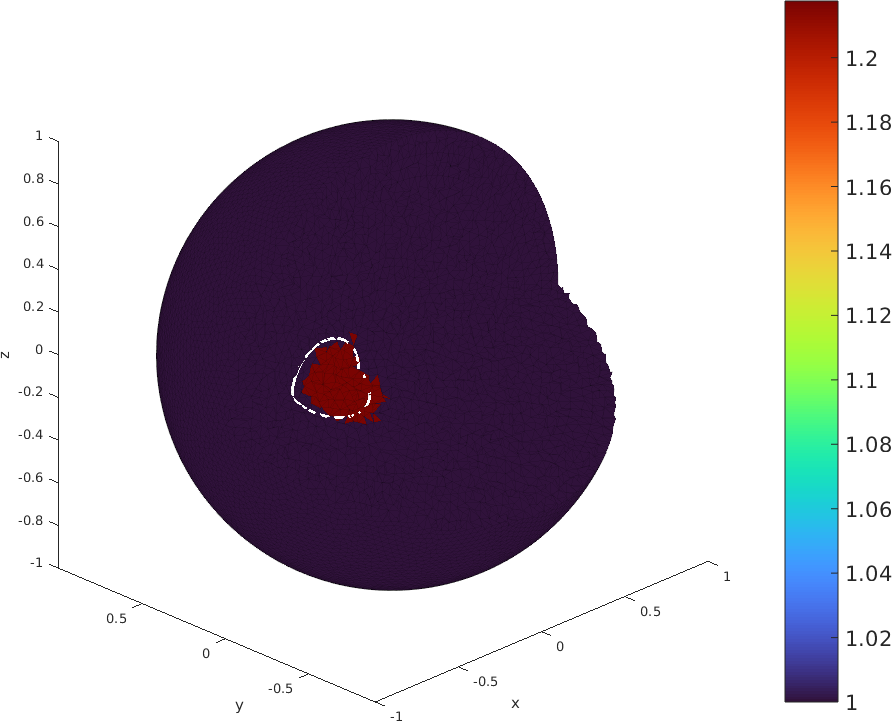}
		\caption{Real part of the reconstructed refractive index in the 3D unit ball. The boundary of the exact support of the perturbation is shown as a white line.}
		\label{fig:reconstruction_3d}
	\end{figure}

	\subsection{Microwave regime}

	Coming back to a two-dimensional setting, we now propose a slightly more complicated geometry. Here, the domain $\Omega$ represents a head profile. The known neighborhood $\mcV$ is the region between the boundary of $\Omega$ and an ellipse, and we choose a bigger ellipse to represent the artificial boundary $\Gamma_\text{int}$. The accessible part of the boundary is the union of 34 patches. The perturbation will be located in the disk centered at $(0.05,0.4)$ and of radius $0.1$. This configuration is illustrated in \autoref{fig:configuration_head}.

	\begin{figure}[!hbt]
		\centering

		\begin{tikzpicture}[scale = 2.5]
			\newcommand{\domainboundarycoordinates}{
				(-0.49407477804999766,0.445666948122729) .. controls (-0.49407477804999766,0.445666948122729) and (-0.5104511135366977,0.7430728902394812) .. (-0.3393819886902455,0.8321826420179219)
				.. controls (-0.18741085248829192,0.9113443027741972) and (-0.020740884965922607,0.8899706396788705) .. (0.12135063204877973,0.8316944873013291)
				.. controls (0.5071098293253042,0.673482617554397) and (0.687028594801552,0.23152937384998887) .. (0.687028594801552,0.23152937384998887)
				.. controls (0.8894773090345722,0.1951024878502941) and (0.9960739343326835,0.07171017891846673) .. (0.9998648043889664,-0.05275984545300609)
				.. controls (1.004245923233998,-0.19661638115194657) and (0.9017425449709044,-0.2996319778346144) .. (0.7626697023381308,-0.34493262395619745)
				.. controls (0.7409781523334679,-0.4442592400946668) and (0.6435049062338472,-0.5066110095904269) .. (0.6463473614121785,-0.5716151793615252)
				.. controls (0.6481233147620682,-0.6122435990596558) and (0.7335727058099801,-0.6708859506004162) .. (0.7177408720414453,-0.7250197981605773)
				.. controls (0.6856240109636759,-0.8348380586244989) and (0.5549670300304413,-0.8024907885204979) .. (0.5067265581363908,-0.7815142689483295)
				.. controls (0.384139286068613,-0.7914049343794536) and (0.3068791785462604,-0.7368080372024104) .. (0.25218279078888306,-0.7063983229049539)
				.. controls (0.25218279078888306,-0.7063983229049539) and (0.07569301000568392,-0.8665037717622859) .. (-0.17380946735625735,-0.8831075408559926)
				.. controls (-0.33259728900256236,-0.8936743703059854) and (-0.43551982944897,-0.7798089818050326) .. (-0.43551982944897,-0.7798089818050326)
				.. controls (-0.47275850279508097,-0.7780144320849869) and (-0.5252478733423545,-0.7681737908895823) .. (-0.5448051179442234,-0.6870880380987554)
				.. controls (-0.6475967399447334,-0.6637691197417483) and (-0.6681942654931241,-0.6023637195232224) .. (-0.6671171637336936,-0.5272085351388178)
				.. controls (-0.6671171637336936,-0.5272085351388178) and (-0.9127763329947863,-0.5299732574898973) .. (-0.979363425793556,-0.37897517635635186)
				.. controls (-1.0268377272371827,-0.2713185021702429) and (-0.9977710429783501,-0.08194257124757558) .. (-0.8356927050252134,-0.0304684695289642)
				.. controls (-0.7295148662637392,0.0032523284747527515) and (-0.5246390281834499,-0.07505434556723435) .. (-0.5246390281834499,-0.07505434556723435)
				.. controls (-0.5259668090125821,-0.02787669342033463) and (-0.5256246358017133,0.03619557504951054) .. (-0.5128507891416166,0.09092314891963928)
				.. controls (-0.5128507891416166,0.09092314891963928) and (-0.9761999972663337,0.1880649333176008) .. (-0.9511006624305995,0.32450999726912305)
				.. controls (-0.9263940829325583,0.4588199933666748) and (-0.49407477804999766,0.445666948122729) .. cycle
			}

			\draw[fill = gray, fill opacity = 0.1, even odd rule]
				\domainboundarycoordinates

				(0.05,0) ellipse (0.4 and 0.6)
			;

			\begin{scope}
				\clip
					(-0.6438406667748302,-0.6321525019820624) circle (0.075)
					(-0.5080016325688554,-0.7549658753737656) circle (0.075)
					(-0.36099774623636227,-0.8442846923859133) circle (0.075)
					(-0.1563087905835232,-0.8833616748287281) circle (0.075)
					(0.026050460816278598,-0.8405630750104072) circle (0.075)
					(0.18608000796304314,-0.7568266840615187) circle (0.075)
					(0.34983117248531403,-0.7586874927492717) circle (0.075)
					(0.5787106410789429,-0.8052077099430988) circle (0.075)
					(0.7071064405339054,-0.6823943365513955) circle (0.075)
					(0.6736118841543499,-0.49631346777608754) circle (0.075)
					(0.8113117270480781,-0.3288406858783105) circle (0.075)
					(0.9620372307560774,-0.1948624603600888) circle (0.075)
					(0.9918101697601265,0.013548112668256174) circle (0.075)
					(0.863414370305164,0.1698560424395148) circle (0.075)
					(0.6754726928421029,0.2610356681394157) circle (0.075)
					(0.574989023703437,0.4433949195392173) circle (0.075)
					(0.4317067547464495,0.6164501275002536) circle (0.075)
					(0.27539882497519086,0.7448459269552161) circle (0.075)
					(0.08373553013662383,0.8527728308448947) circle (0.075)
					(-0.10606695601419025,0.8825457698489438) circle (0.075)
					(-0.31819914641804115,0.8490512134693888) circle (0.075)
					(-0.44659494587300386,0.715072987951167) circle (0.075)
					(-0.48939354569132454,0.5382961626146243) circle (0.075)
					(-0.6029028756442625,0.4489773456024766) circle (0.075)
					(-0.8150350660481134,0.4136219805351682) circle (0.075)
					(-0.9434308655030759,0.28708698976795866) circle (0.075)
					(-0.7927053617950766,0.1754384685027739) circle (0.075)
					(-0.617789345146287,0.11961420787018162) circle (0.075)
					(-0.52288810207088,0.013548112668256174) circle (0.075)
					(-0.7927053617950766,-0.5167823633413715) circle (0.075)
					(-0.969482187131619,-0.4144378855149521) circle (0.075)
					(-0.9955335087601622,-0.22091378198863176) circle (0.075)
					(-0.8913282222459897,-0.05158019140310177) circle (0.075)
					(-0.6698919884033734,-0.034832913213324046) circle (0.075)
				;
				\draw[line width = 0.7mm]
					\domainboundarycoordinates
				;
			\end{scope}

			\draw[dashed]
				(0.05,0) ellipse (0.48 and 0.68)
			;

			\draw[dotted]
				(0.05,0.4) circle (0.1)
			;

			\node at (-0.05,0.75) {$\Gamma_\text{int}$};
			\node at (-0.05,-0.7) [node font = \LARGE] {$\mcV$};
			\node at (0.05,0.4) {$D$};
		\end{tikzpicture}

		\caption{Configuration of the domain $\Omega$ for the head profile. The thicker parts on the boundary is $\Gamma_0$. The artificial boundary is the dashed line, while the dotted line represents the boundary of the support of the perturbation.}
		\label{fig:configuration_head}
	\end{figure}

	We use here the common definitions of the physical constants. The permittivity in vacuum is then set to $\eps_0 \coloneqq \SI{8.854e-12}{\farad\per\metre}$ and the magnetic permeability of the vacuum is set to $\mu_0 \coloneqq \SI{4\pi e-7}{\henry\per\metre}$. We choose a frequency in the microwave regime, that is $\omega \coloneqq \SI{1e8}{\hertz}$. The background permittivity of the medium is fixed to $\eps \coloneqq \SI{1e-10}{\farad\per\metre}$ and its background conductivity is fixed to $\sigma \coloneqq \SI{0.33}{\siemens\per\metre}$. This yields $\kappa_0 \approx \num{1.1294e1} + \num{3.7271e2}i$. As previously, the exact refractive index is a perturbation of this background index:
	\[\kappa_\text{ex} \coloneqq \kappa_0(1 + a_\text{ex}\chi_{D_\text{ex}}),\]
	with $a_\text{ex} \coloneqq 0.1$ and $D_\text{ex}$ the disk centered at $\bfx_\text{ex} \coloneqq (0.05,0.4)$ and of radius $r_\text{ex} \coloneqq 0.1$.

	Data are generated on a mesh of $\Omega$ of size \scnum{0.00280451}, using \num{1396929} triangles (\num{700240} vertices and \num{2097168} edges). We use 16 incident waves, defined in the same way as in the unit disk case (see \autoref{sec:numerics_unit_disk}). A noise is added to the generated data, using the following procedure. Each degree of freedom of the tangential trace $\bfg_D$ of the field is perturbed by the addition of a random complex number in such a way that
	\[\frac{\norm{\bfg_D^\eta - \bfg_D}[0,\Gamma_0]}{\norm{\bfg_D}[0,\Gamma_0]} = \eta,\]
	where $\eta > 0$ is the level of noise and $\bfg_D^\eta$ is the noised trace. In this test, we fix this level of noise to $\eta = 0.02$. The data transmission problem is solved on a mesh of $\mcV$ of size \scnum{0.00345773}, using \num{622148} triangles (\num{313135} vertices and \num{935283} edges). The minimization is achieved on a mesh of $\Omega$ of size \scnum{0.0136741}, using \num{60194} triangles (\num{30474} vertices and \num{90667} edges). The retrieved parameters are summarized in \autoref{tab:2d_head}, with corresponding refractive index shown in \autoref{fig:reconstruction_2d_microwave}.

	\begin{table}[hbt]
		\centering

		\begin{tabular}{c|ccc}
			\hline
			Parameter & Exact value & Approximation & Relative error \\
			\hline
			Center & (\num{0.05}, \num{0.4}) & (\rounded[5]{0.05501903145}, \rounded[5]{0.3847967252}) & \scnum[5]{0.03971670769068229} \\
			Radius & \num{0.1} & \rounded[5]{0.1179580571} & \scnum[5]{0.17958057099999994} \\
			Amplitude & \num{0.1} & \rounded[5]{0.07054430332} & \scnum[5]{0.29455696680000004} \\
			\hline
		\end{tabular}

		\caption{Reconstruction of a spherical perturbation in a head profile in 2D, with realistic physical parameters in the microwave regime.}
		\label{tab:2d_head}
	\end{table}

	\begin{figure}[!hbt]
		\centering
		\includegraphics[width=0.5\textwidth]{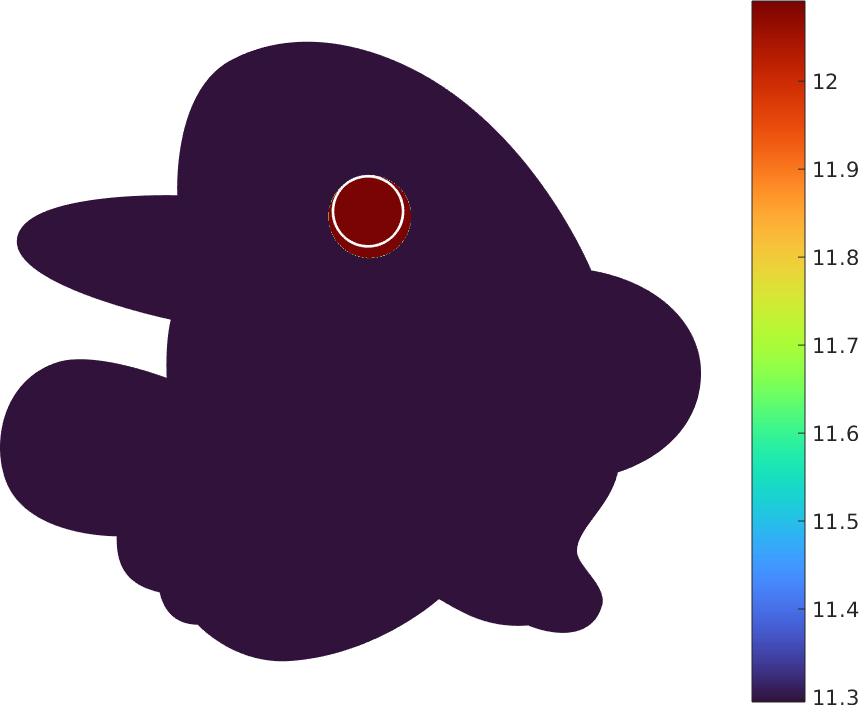}
		\caption{Real part of the reconstructed refractive index in the geometry of a 2D head profile in microwave regime. The boundary of the exact support of the perturbation is shown as a white line.}
		\label{fig:reconstruction_2d_microwave}
	\end{figure}

	\subsection{Discussion about the location}

	In all tested configurations, the minimization of the cost function defined in \autoref{sec:cost} leads to good results, with a satisfying precision. One can notice that the locations of the reconstructed perturbations are always deeper than the exact supports. This can actually be explained by the first step of the procedure, that is the transmission of the boundary data by the quasi-reversibility method. Indeed, in \autoref{tab:2d_unit_exact}, we see the parameters retrieved when we use the exact data as entry of the minimization step, instead of the transmitted data, in the case of the unit disk in $\R^2$ (see \autoref{sec:numerics_unit_disk}). The reconstructed perturbation is shown in \autoref{fig:reconstruction_2d_unit_exact}.

	\begin{table}[hbt]
		\centering

		\begin{tabular}{c|ccc}
			\hline
			Parameter & Exact value & Approximation & Relative error \\
			\hline
			Center & (\num{-0.4}, \num{0}) & (\rounded[5]{-0.3975437238}, \rounded[5]{0.002265724414}) & \scnum[5]{0.008354190524402283} \\
			Radius & \num{0.2} & \rounded[5]{0.2030149992} & \scnum[5]{0.015074995999999896} \\
			Amplitude & \num{0.1} & \rounded[5]{0.09822489383} & \scnum[5]{0.017751061699999987} \\
			\hline
		\end{tabular}

		\caption{Reconstruction of a spherical perturbation in the unit disk in 2D, with unitary physical parameters. The transmission step is skipped and exact data are used.}
		\label{tab:2d_unit_exact}
	\end{table}

	\begin{figure}[hbt]
		\centering
		\includegraphics[width=0.5\textwidth]{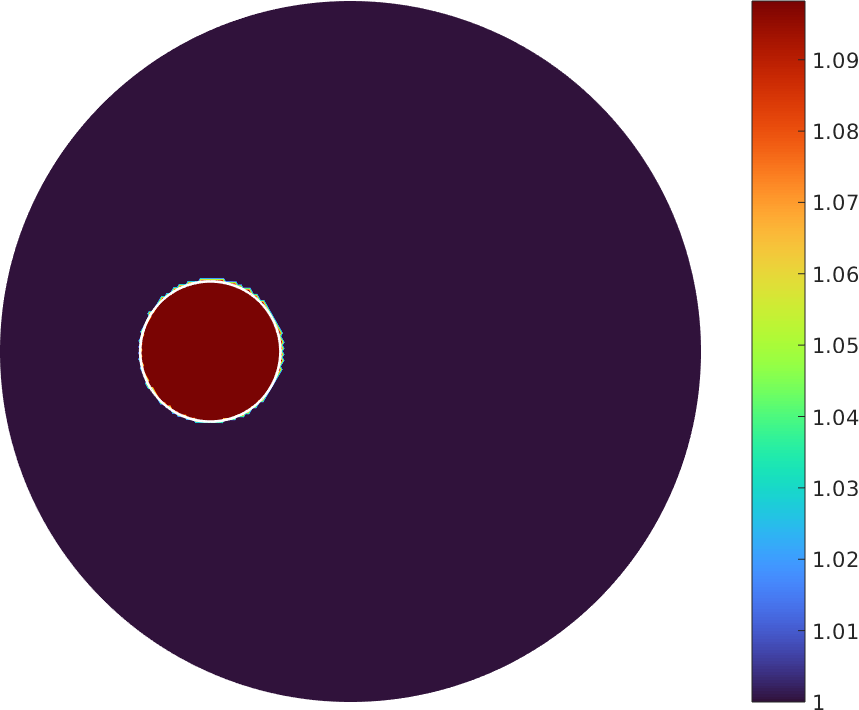}
		\caption{Real part of the reconstructed refractive index in the unit disk with exact data on the whole boundary. The boundary of the exact support of the perturbation is shown as a white line.}
		\label{fig:reconstruction_2d_unit_exact}
	\end{figure}

	To understand the reason behind this behavior, we compare in \autoref{fig:comparison_traces} the trace of the difference field $(\bfE[\kappa_\text{ex}] - \bfE[\kappa_0]) \times \bfn$ on $\Gamma_\text{int}$ with the trace resulting from the resolution of the transmission problem. It can be observed that the reconstructed surface peak is well-located. However, it has a lower amplitude and occupies more space over the surface. As it has been observed in \cite{DarbasHeleineLohrengel_sens}, the space occupied by the surface peak depends on the depth of the perturbation: the deeper a perturbation is, the larger will be the surface peak. As the reconstructed surface peak is larger than the exact one, it is then natural to reconstruct deeper perturbations at the end. It is not clear currently whether or not it is possible to predict the deformation of the peak due to the transmission step in order to correct it and obtain more precise results.

	\begin{figure}[hbt]
		\centering
		\includegraphics[width=0.7\textwidth]{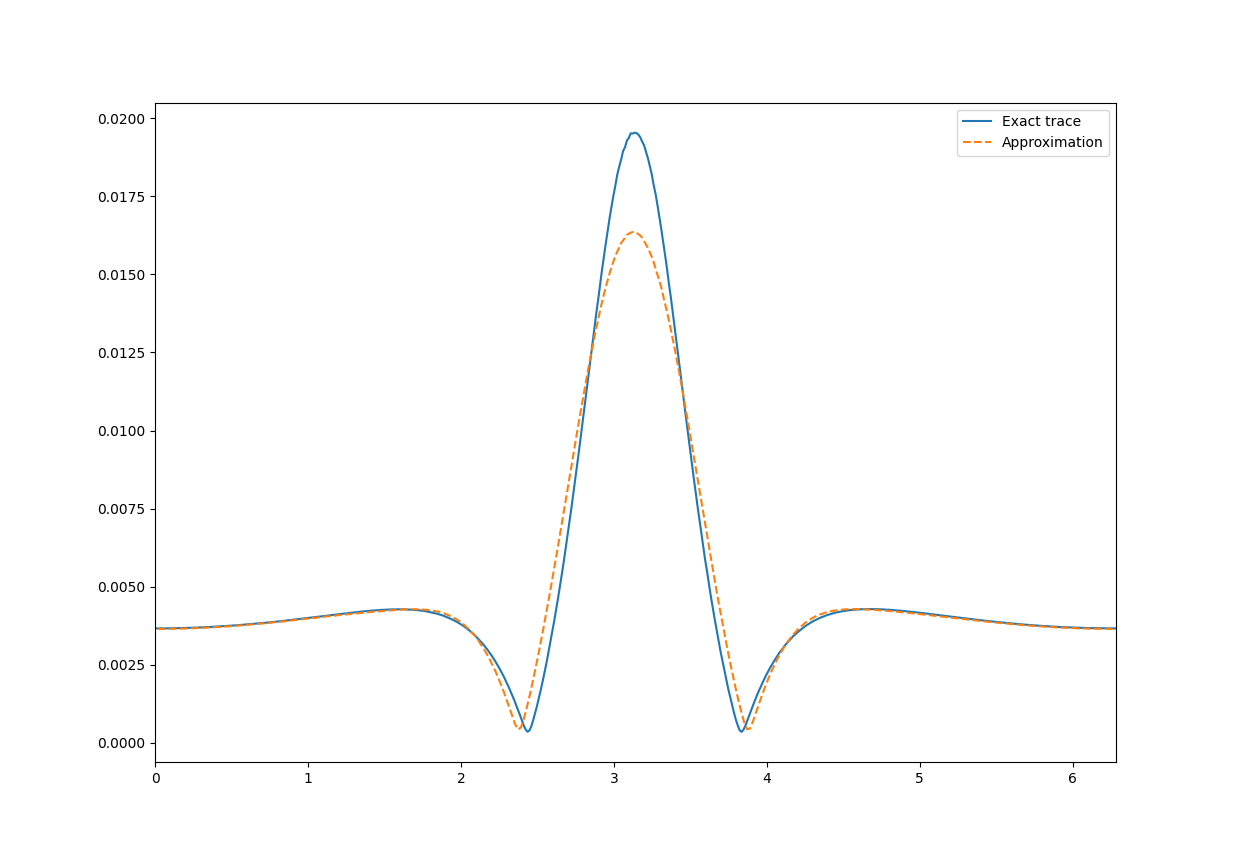}
		\caption{Comparison of the exact tangential trace of the difference field $(\bfE[\kappa_\text{ex}] - \bfE[\kappa_0]) \times \bfn$ on $\Gamma_\text{int}$ and its approximation by the quasi-reversibility method. The circle $\Gamma_\text{int}$ is unfolded: the $x$-axis shows the angle of a point while $y$-axis shows the value of the modulus of the trace at this point.}
		\label{fig:comparison_traces}
	\end{figure}

	\subsection{More complex perturbations}

	\subsubsection{Approximation by a ball}

	In the previous test cases, the algorithm is looking for a perturbation of constant amplitude contained in a single ball. One can wonder what it finds out with data generated from perturbations that do not satisfy these assumptions. In \autoref{fig:reconstruction_2d_complex}, we show the results of the complete inversion procedure in such cases, using the same global configuration as in the unit disk case shown in \autoref{sec:numerics_unit_disk}. In the first test, the amplitude of the perturbation is still constant and equal to \num{0.1}, but the support has a shape of a star instead of a ball. In the second test, we go back to a disk, but the amplitude is not constant: it is set to
	\[a_\text{max}e^{-\frac{r_c^2}{1 - r_c^2}},\]
	where $a_\text{max} \coloneqq 0.1$ is the maximum amplitude and $r_c$ is the distance of the current point to the center of the disk, divided by the radius of the disk. Then, $\kappa \in \mcC^\infty(\Omega)$, equal to $\kappa_0$ outside of the perturbation and to $\kappa_0(1 + a_\text{max})$ on the center of the perturbation.

	\begin{figure}[p]
		\centering

		\includegraphics[width=0.45\textwidth]{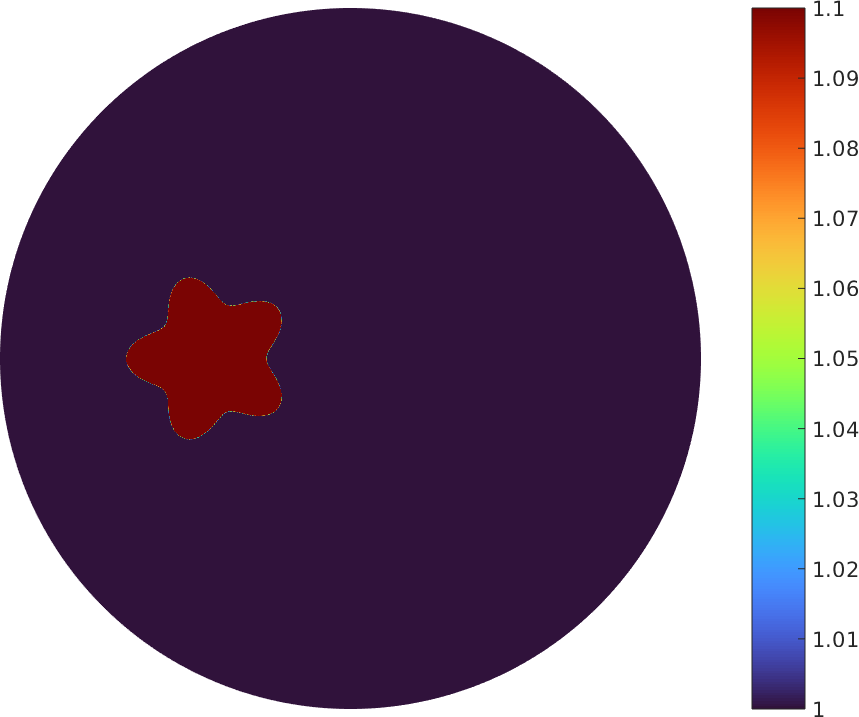}
		\hfill
		\includegraphics[width=0.45\textwidth]{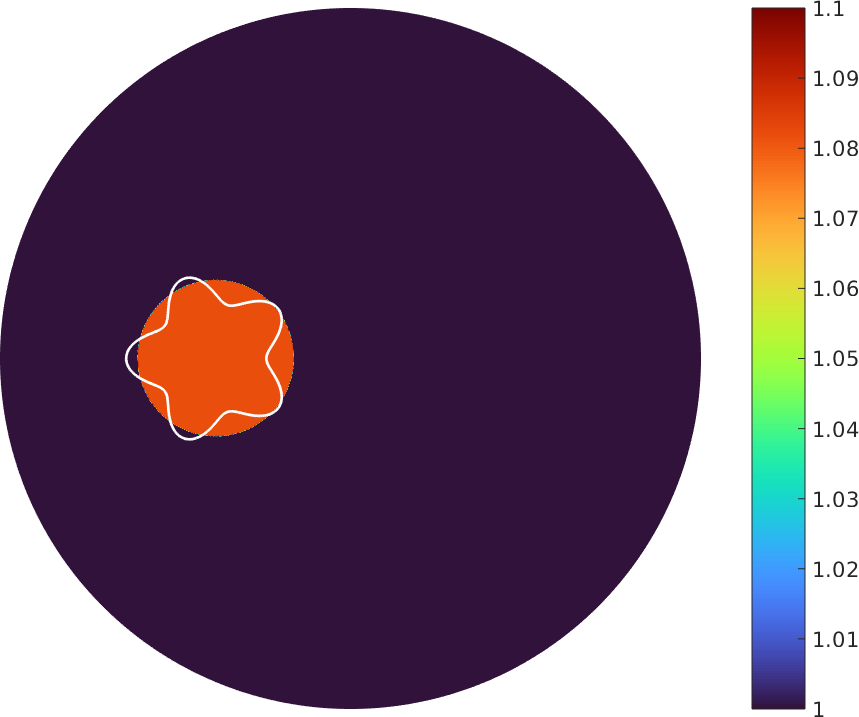}

		\includegraphics[width=0.45\textwidth]{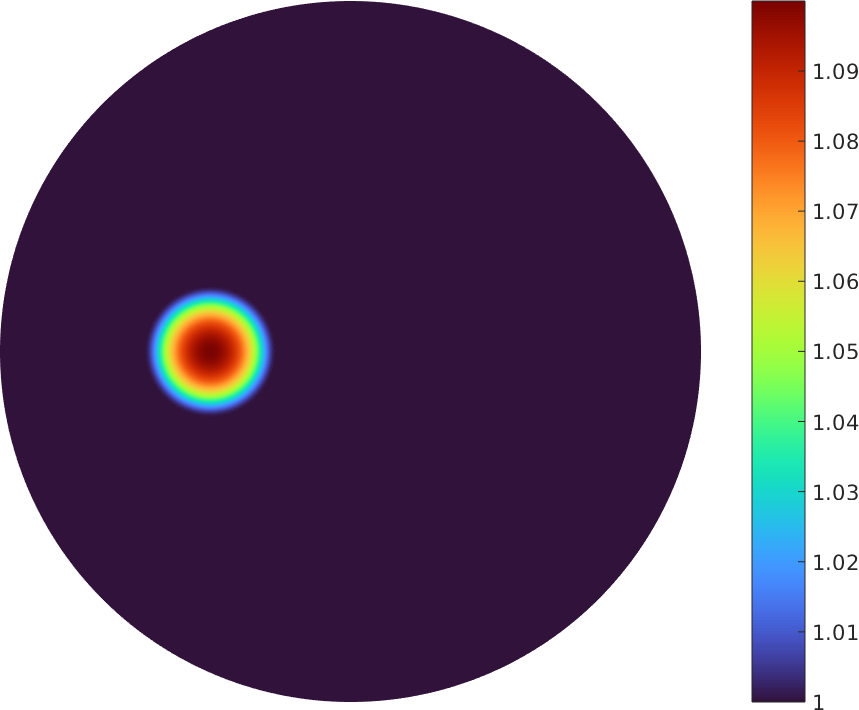}
		\hfill
		\includegraphics[width=0.45\textwidth]{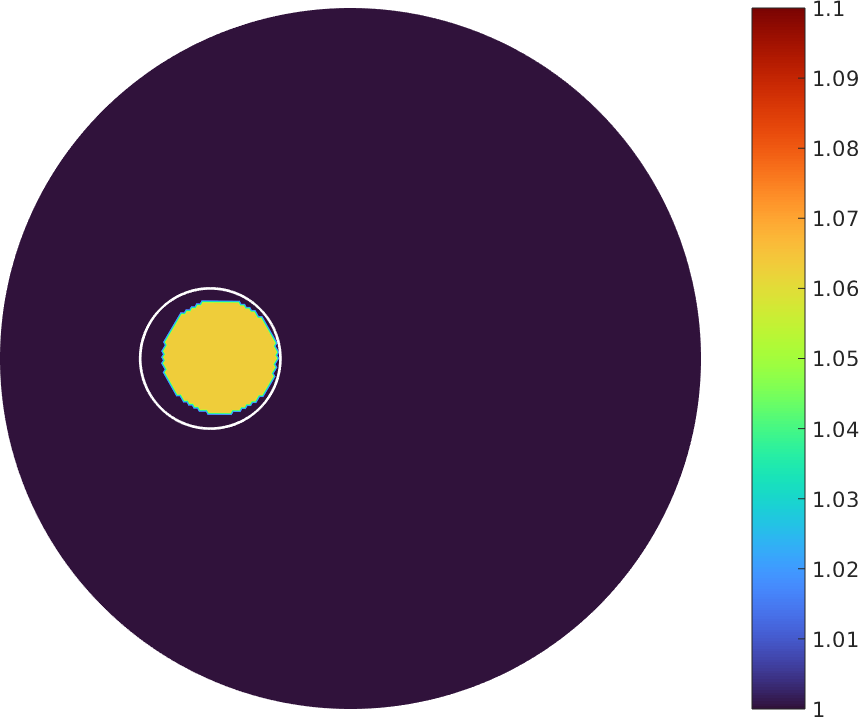}

		\caption{Reconstruction of more complex perturbations in the unit disk in 2D. Top pictures: star-shaped perturbation with constant amplitude. Bottom pictures: disk-shaped perturbation with variable amplitude. Left pictures: real part of the exact refractive index. Right pictures: results of the complete inversion procedures.}
		\label{fig:reconstruction_2d_complex}
	\end{figure}

	In both cases, the reconstructed perturbation is deeper than the exact support, following the same behavior discussed in the previous section. In the case of the star-shaped perturbation, the algorithm finds an amplitude equal to \rounded[5]{0.090844674} instead of \num{0.1}. As the procedure is looking for a constant amplitude, it does not retrieve the variations of the second case: it finds an amplitude equal to \rounded[5]{0.06311042633}. It is interesting that, in both cases, the procedure provides a rough idea of the perturbation, which could serve as a good initial guess for an algorithm looking for more parameters.

	\subsubsection{Ellipsoids}

	As a first step to handle more complex shapes, we propose to reconstruct ellipsoids, in 2D and 3D configurations. Except for the shape of the perturbation's support, the settings are the ones described in \autoref{sec:numerics_unit_disk} and \autoref{sec:numerics_unit_ball}. The results of the complete inversion procedure are listed in \autoref{tab:2d_ellipse} for the 2D case, and in \autoref{tab:3d_ellipsoid} for the 3D case. They are illustrated in \autoref{fig:reconstruction_ellipsoid}.

	\begin{table}[hbt]
		\centering

		\begin{tabular}{c|ccc}
			\hline
			Parameter & Exact value & Approximation & Relative error \\
			\hline
			Center & (\num{-0.4}, \num{0}) & (\rounded[5]{-0.3918151755}, \rounded[5]{0.001865934299}) & \scnum[5]{0.020987058944738006} \\
			$x$-radius & \num{0.15} & \rounded[5]{0.1396220837} & \scnum[5]{0.06918610866666657} \\
			$y$-radius & \num{0.25} & \rounded[5]{0.2639521543} & \scnum[5]{0.05580861720000008} \\
			Amplitude & \num{0.1} & \rounded[5]{0.100963128} & \scnum[5]{0.009631279999999937} \\
			\hline
		\end{tabular}

		\caption{Reconstruction of an ellipsoidal perturbation in the unit disk in 2D, with unitary physical parameters.}
		\label{tab:2d_ellipse}
	\end{table}

	\begin{table}[hbt]
		\centering

		\begin{tabular}{c|ccc}
			\hline
			Parameter & Exact value & Approximation & Relative error \\
			\hline
			Center & (\num{-0.4}, \num{0}, \num{0}) & (\rounded[5]{-0.3524504152}, \rounded[5]{-0.04525513963}, \rounded[5]{-0.02454621712}) & \scnum[5]{0.17520536971803571} \\
			$x$-radius & \num{0.2} & \rounded[5]{0.1936337415} & \scnum[5]{0.0318312925} \\
			$y$-radius & \num{0.4} & \rounded[5]{0.3473021405} & \scnum[5]{0.13174464875} \\
			$z$-radius & \num{0.3} & \rounded[5]{0.2810993029} & \scnum[5]{0.06300232366666658} \\
			Amplitude & \num{0.2} & \rounded[5]{0.247375303} & \scnum[5]{0.2368765149999999} \\
			\hline
		\end{tabular}

		\caption{Reconstruction of an ellipsoidal perturbation in the unit ball in 3D, with unitary physical parameters.}
		\label{tab:3d_ellipsoid}
	\end{table}

	\begin{figure}[p]
		\centering

		\includegraphics[width=0.45\textwidth]{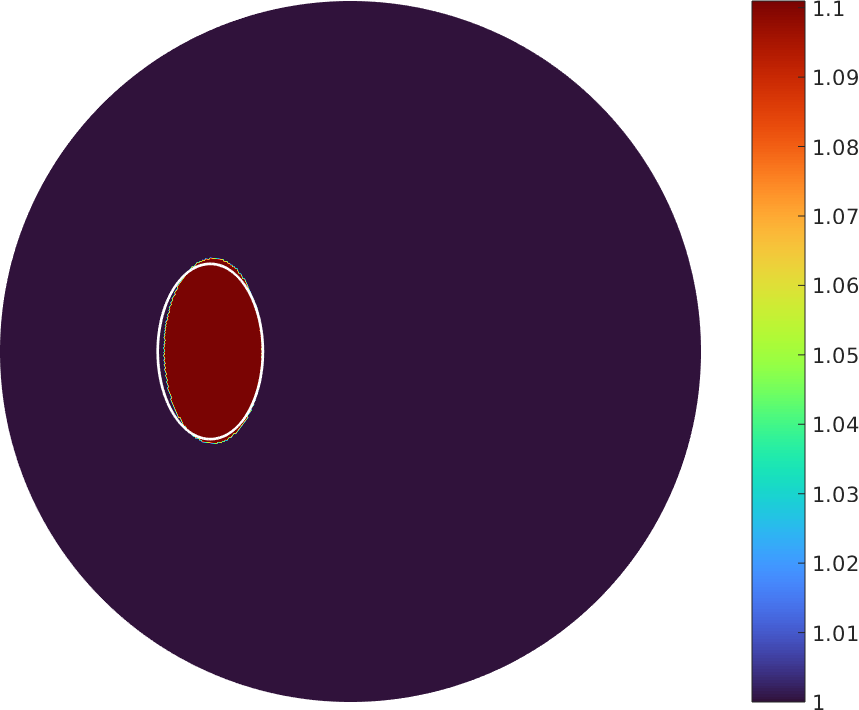}
		\hfill
		\includegraphics[width=0.45\textwidth]{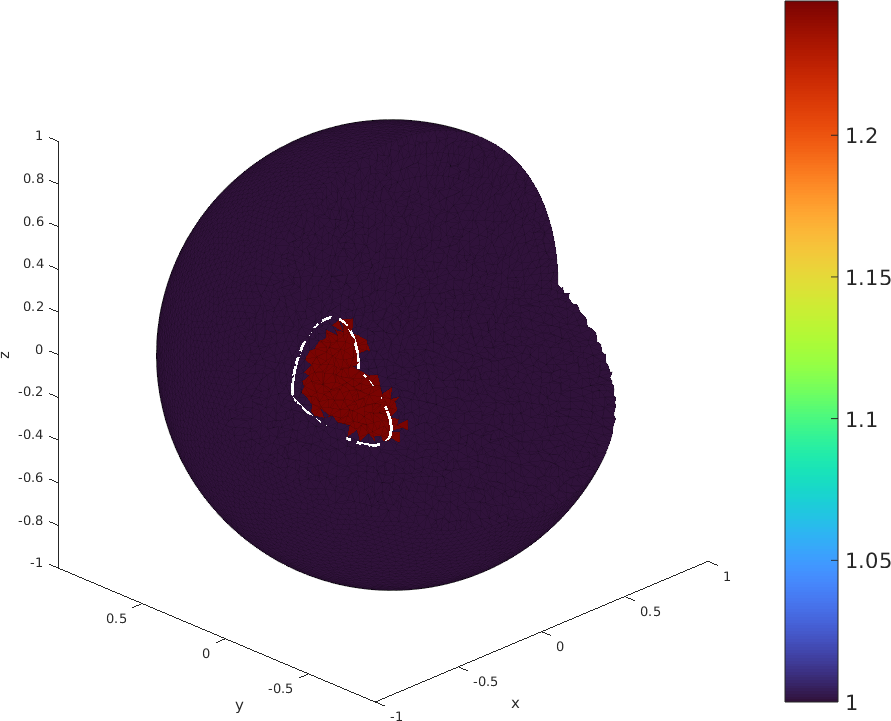}

		\caption{Reconstruction of an ellipsoidal perturbation in the unit disk (left picture) and in the unit ball (right picture).}
		\label{fig:reconstruction_ellipsoid}
	\end{figure}

	\subsubsection{Multiple parts}

	In the previous test cases, the support of the perturbation is always composed of a single connected component. When it is not the case, as it has been proved in \cite{DarbasHeleineLohrengel_sens}, each component will generate its own surface peak. This behavior allows us to automatically detect when there are several connected components, without having to a priori know their numbers. Then, in the localization step, each peak leads to an approximation of the projection of the corresponding component's center on $\Gamma_\text{int}$. The idea is then to minimize
	\[\tilde{j} \colon (d_1, \ldots, d_P, r_1, \ldots, r_P) \mapsto \min_a J\left(\bigcup_{i=1}^P D(d_i, r_i), a\right),\]
	where $P \in \N^*$ is the number of peaks that have been found. Once again, we consider a test case in the unit disk and another one in the unit ball. The results of the 2D case are listed in \autoref{tab:2d_multiple} and the ones for the 3D case are listed in \autoref{tab:3d_multiple}. Both are illustrated in \autoref{fig:reconstruction_multiple}.

	\begin{table}[hbt]
		\centering

		\begin{tabular}{c|ccc}
			\hline
			Parameter & Exact value & Approximation & Relative error \\
			\hline
			Center 1 & (\num{-0.55}, \num{-0.45}) & (\rounded[5]{-0.5315738734}, \rounded[5]{-0.4399620166}) & \scnum[5]{0.029527072733237108} \\
			Radius 1 & \num{0.1} & \rounded[5]{0.1218877446} & \scnum[5]{0.21887744599999995} \\
			Center 2 & (\num{0.4}, \num{0.6}) & (\rounded[5]{0.3848769869}, \rounded[5]{0.5855851716}) & \scnum[5]{0.02897256006267024} \\
			Radius 2 & \num{0.07} & \rounded[5]{0.08498304818} & \scnum[5]{0.2140435454285714} \\
			Amplitude & \num{0.1} & \rounded[5]{0.06953012227} & \scnum[5]{0.3046987773000001} \\
			\hline
		\end{tabular}

		\caption{Reconstruction of a perturbation having two connected components in the unit disk in 2D, with unitary physical parameters.}
		\label{tab:2d_multiple}
	\end{table}

	\begin{table}[hbt]
		\centering

		\begin{tabular}{c|ccc}
			\hline
			Parameter & Exact value & Approximation & Relative error \\
			\hline
			Center 1 & (\num{-0.5}, \num{0}, \num{0}) & (\rounded[5]{-0.4296754798}, \rounded[5]{-0.03324893377}, \rounded[5]{-0.02997686217}) & \scnum[5]{0.16672902571251635} \\
			Radius 1 & \num{0.2} & \rounded[5]{0.1877256965} & \scnum[5]{0.0613715175} \\
			Center 2 & (\num{0}, \num{0}, \num{0.6}) & (\rounded[5]{-0.0336711867}, \rounded[5]{0.01040770127}, \rounded[5]{0.5009860535}) & \scnum[5]{0.17516524468988853} \\
			Radius 2 & \num{0.1} & \rounded[5]{0.09801102098} & \scnum[5]{0.019889790200000035} \\
			Amplitude & \num{0.2} & \rounded[5]{0.2461683003} & \scnum[5]{0.23084150149999996} \\
			\hline
		\end{tabular}

		\caption{Reconstruction of a perturbation having two connected components in the unit ball in 3D, with unitary physical parameters.}
		\label{tab:3d_multiple}
	\end{table}

	\begin{figure}[!hbt]
		\centering

		\includegraphics[width=0.45\textwidth]{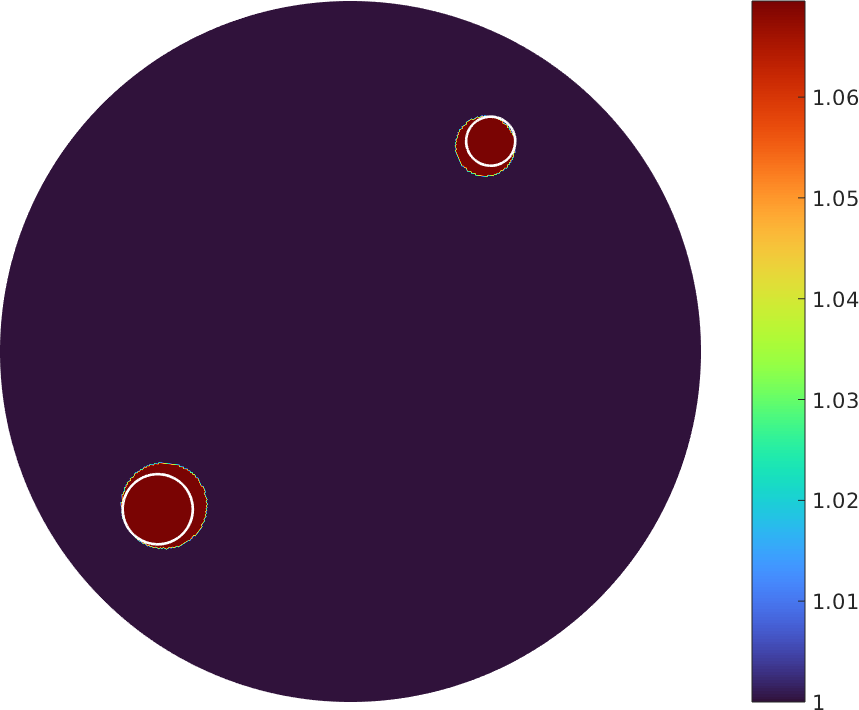}
		\hfill
		\includegraphics[width=0.45\textwidth]{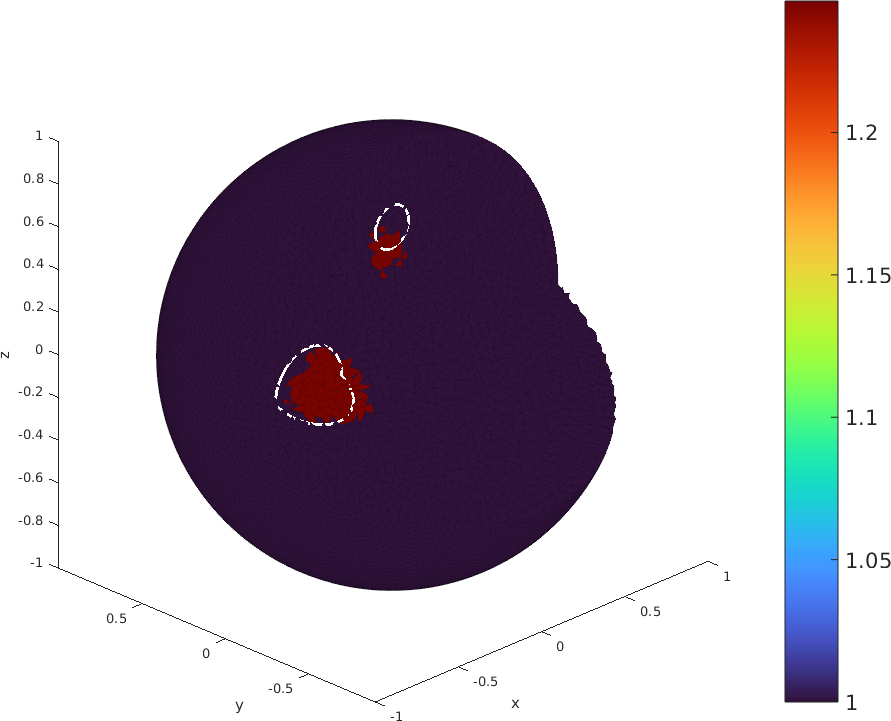}

		\caption{Reconstruction of a perturbation composed of two connected components in 2D (left picture) and in 3D (right picture).}
		\label{fig:reconstruction_multiple}
	\end{figure}

	\subsubsection{Other shapes}

	We propose here a simple idea to explore the case of more complex shapes. The first step is to apply the inverse procedure to get a ball whose center is denoted $(x_0, y_0)$ and radius is denoted $r_0$. Then, we slightly deform the boundary of this ball. To this end, in 2D, we parametrize this boundary:
	\[x(\theta) = x_0 + (r_0 + r(\theta)) \cos\theta, \quad y(\theta) = y_0 + (r_0 + r(\theta)) \sin\theta, \quad \text{with } \theta \in \interval[open right]{0}{2\pi},\]
	where $r$ describes the local perturbation of the radius. The goal is then to reconstruct this function. For that, we chose to approximate its decomposition into a Fourier series:
	\[r \approx \sum_{n=1}^{N_r} (a_n \cos(n \cdot) + b_n \sin(n \cdot)),\]
	with $N_r \in \N^*$ the number of pairs of coefficients $((a_n, b_n))_{n=1}^{N_r}$ to consider in the truncation. We obtain the following cost function to minimize:
	\[\tilde{j}_{N_r} \colon (d, r_0, a_1, \ldots, a_{N_r}, b_1, \ldots, b_{N_r}) \mapsto \min_a J(D(d, r_0, a_1, \ldots, a_{N_r}, b_1, \ldots, b_{N_r}), a),\]
	where $D(d, r_0, a_1, \ldots, a_{N_r}, b_1, \ldots, b_{N_r})$ is the currently tested support, with boundary described as above. The idea is to minimize $\tilde{j}_{N_r}$ using the previously found disk as an initial guess, with $N_r$ incrementally increased until no significant change is observed in the value of the cost function.

	The test case is the star-shaped perturbation shown in \autoref{fig:reconstruction_2d_complex}. Two configurations are considered: one with total data, where the data completion step is skipped, and one with partial data, using the complete inversion procedure. We obtain the reconstructions shown in \autoref{fig:reconstruction_2d_star}.

	\begin{figure}[!hbt]
		\centering

		\includegraphics[width=0.45\textwidth]{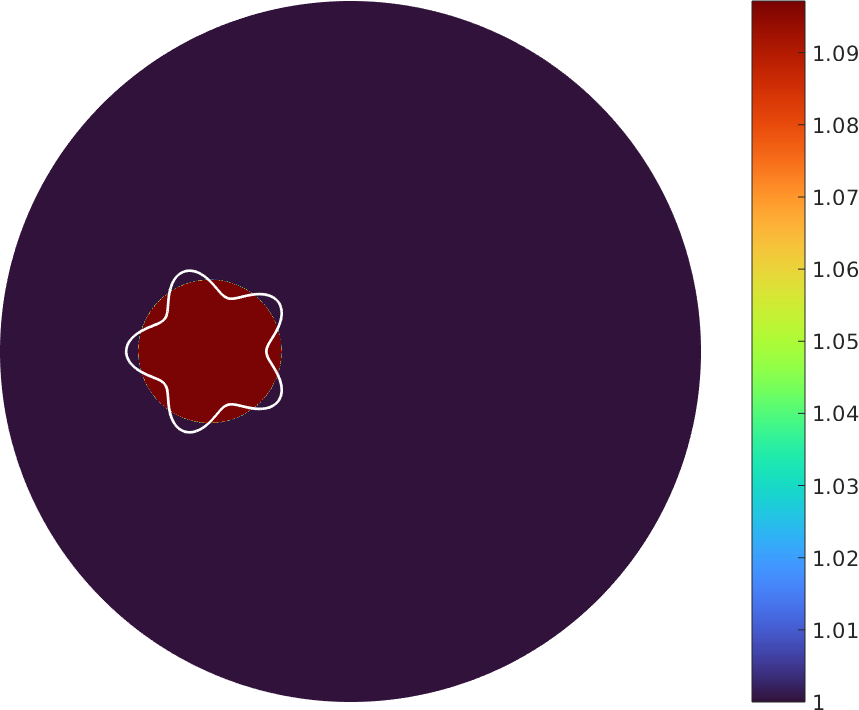}
		\hfill
		\includegraphics[width=0.45\textwidth]{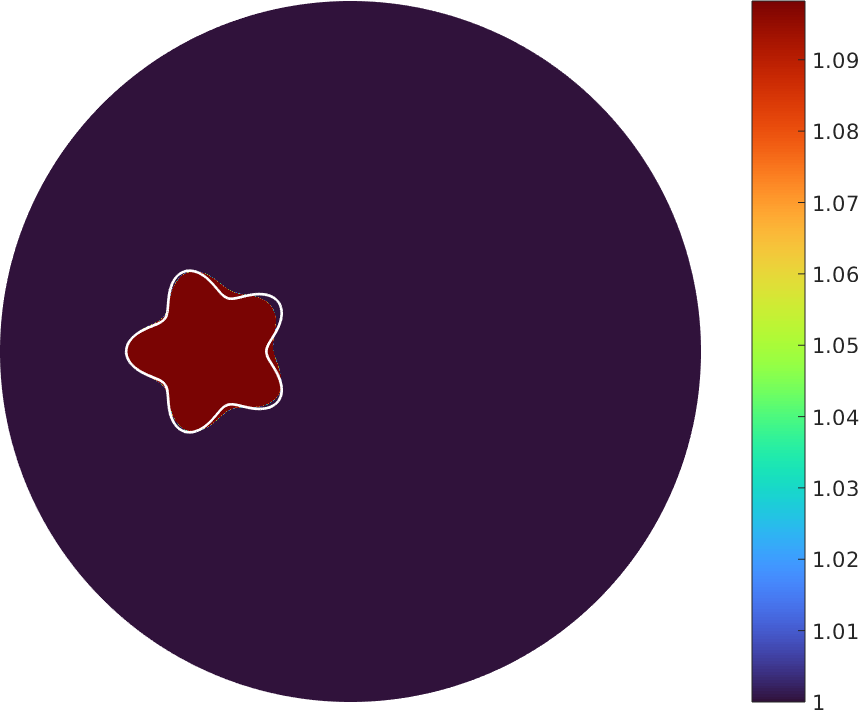}

		\includegraphics[width=0.45\textwidth]{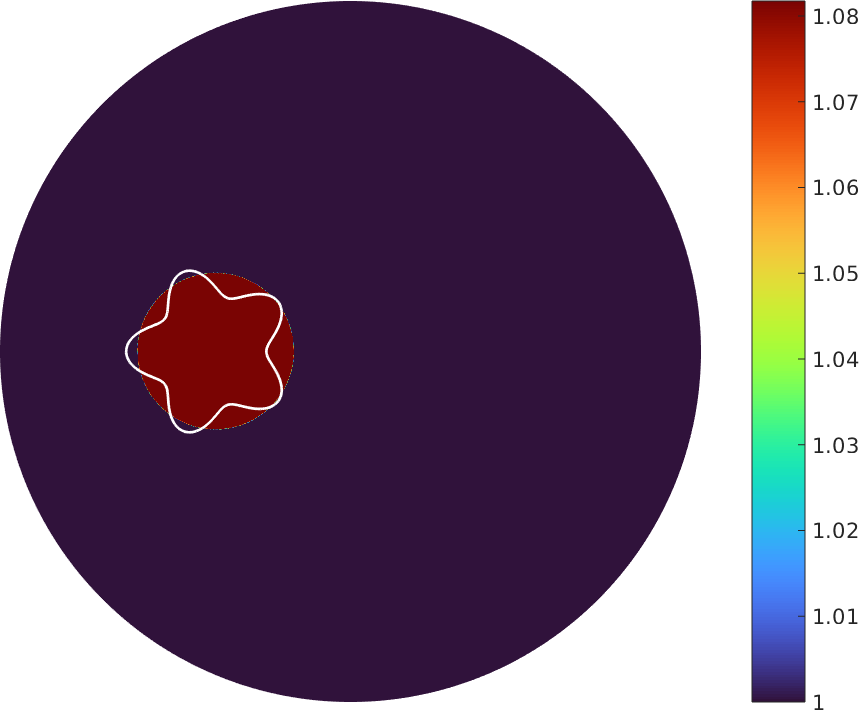}
		\hfill
		\includegraphics[width=0.45\textwidth]{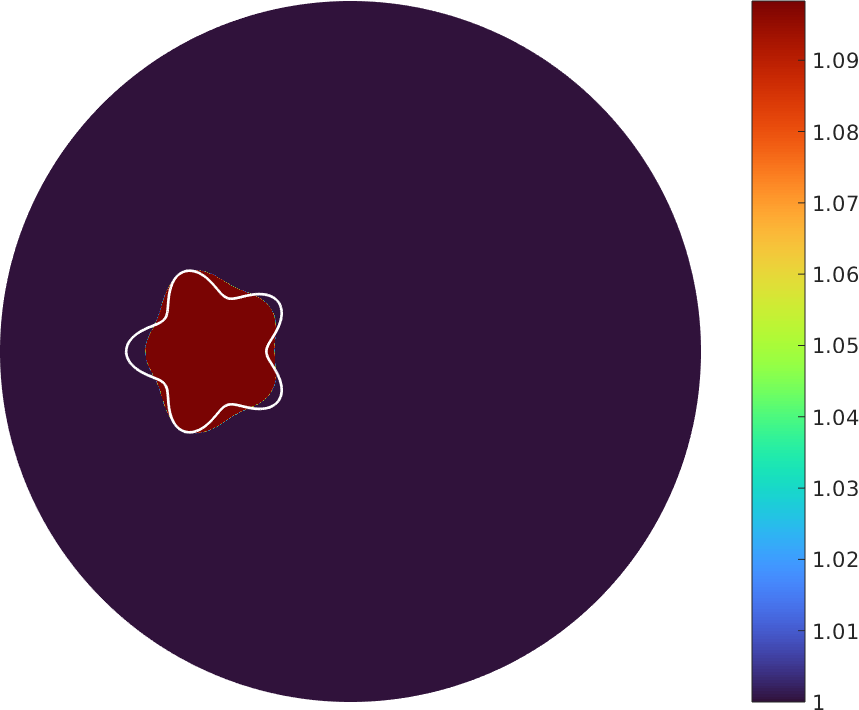}

		\caption{Reconstruction of a star-shaped perturbation, using total data on top figures and partial data on bottom figure. Left: approximation by a disk. Right: final result.}
		\label{fig:reconstruction_2d_star}
	\end{figure}

	While the reconstructed perturbation is close to the exact one in the case of total data, the case of partial data is less satisfying: the location is reasonable, but the shape is not fully recovered. When used in the direct solver, this perturbation leads to a tangential trace that is very close to the one we provide in the minimization, as illustrated in \autoref{fig:comparison_traces_min}. Hence, it seems that the issue comes from the data completion step that certainly deforms the shape of the perturbation.

	\begin{figure}[hbt]
		\centering

		\includegraphics[width=0.7\textwidth]{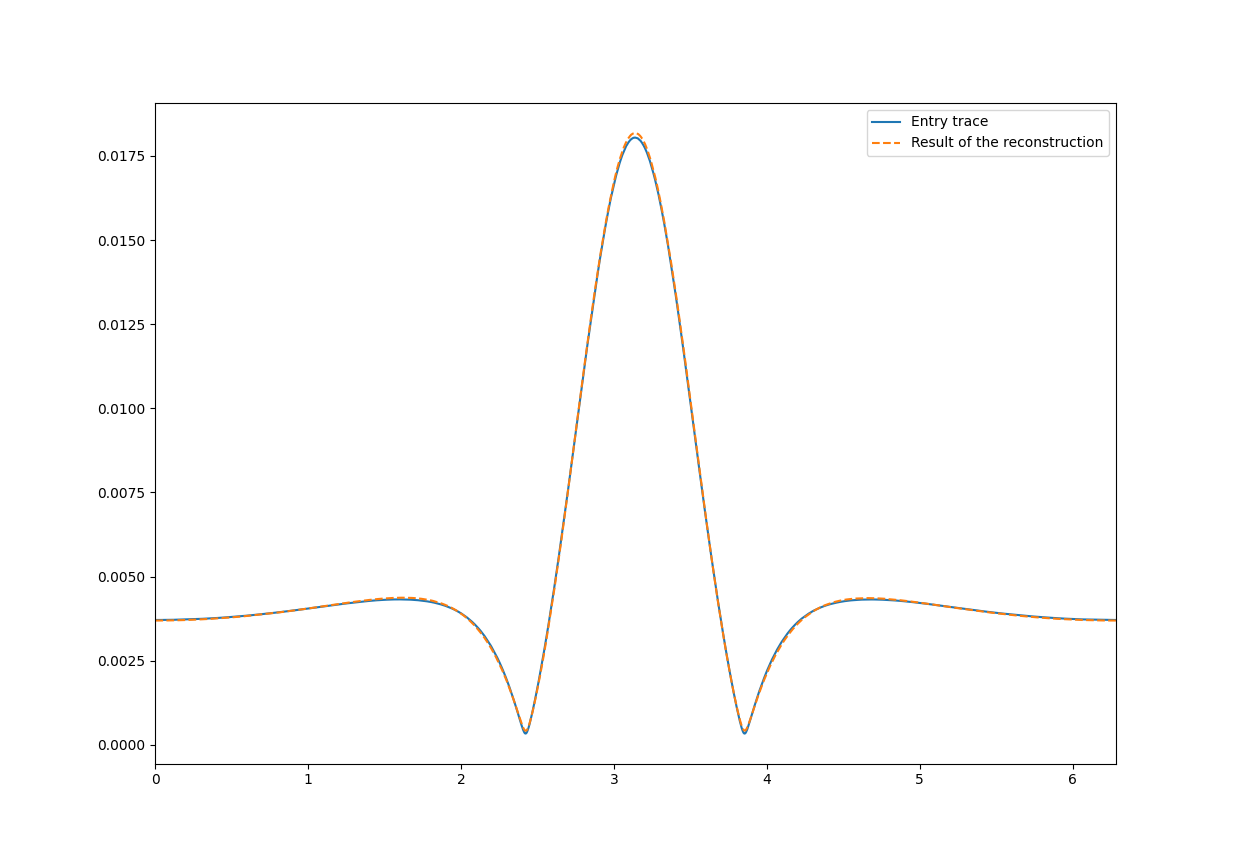}

		\caption{Comparison of a tangential trace of the difference field generated by the star-shaped perturbation, after data completion, with the trace generated from the reconstructed perturbation.}
		\label{fig:comparison_traces_min}
	\end{figure}

	It has to be noted that the initial minimization, where the goal is to approximate the support by a ball, should not be ommitted. As an example, we provide \autoref{fig:reconstruction_2d_star_fail} where the cost function is minimized with the exact same setting as for the partial data case in \autoref{fig:reconstruction_2d_star}, except that it is not initialized with the disk approximation.

	\begin{figure}[hbt]
		\centering

		\includegraphics[width=0.5\textwidth]{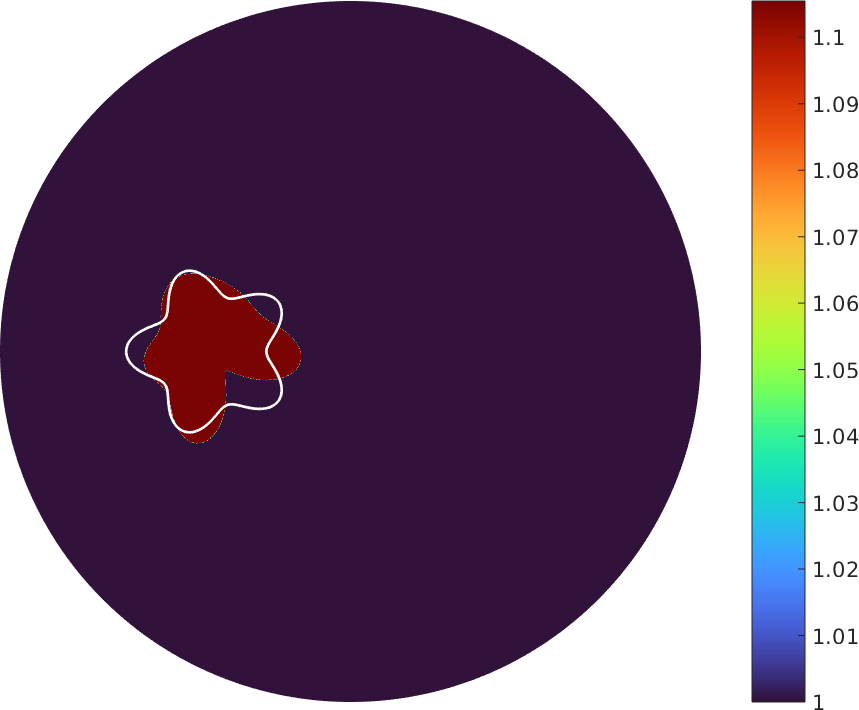}

		\caption{Result of the minimization of the cost function without using the previous rough approximation as initial guess.}
		\label{fig:reconstruction_2d_star_fail}
	\end{figure}

	\section{Concluding remarks}

	Using the iterated sensitivity equation, we were able to regularize the classical cost functional used to solve the inverse problem we are interested in. The complete inversion procedure provides good results when it comes to the reconstruction of a perturbation from the knowledge of partial surface measurements, even in cases where the shape or amplitude of the perturbation is not what the algorithm is expecting.

	In addition to the regularization it provides, the use of an asymptotic expansion is also interesting from a numerical point of view: using a direct solver, the evaluation of this cost functional is faster than the evaluation of the classical one, as it is always the same linear system that needs to be inverted. The same remark applies to the use of the iterated quasi-reversibility: all iterations, for all incident waves, share the same linear system. The limitations of using direct solvers are however well known. In particular, it prevents the use of fine meshes, especially in 3D.

	The examples of reconstruction of shape parameters led to promising results, especially in the case of total data. If one would like to capture more complex geometries in this case, it would be better to study other approaches, for instance involving shape derivatives. The case of partial data is more delicate: as observed in \autoref{fig:comparison_traces}, some informations are lost. A study of the (iterated) quasi-reversibility method is needed to better understand what happens exactly, and to determine whether it is possible or not to counter this effect.

	\section*{Acknowledgements}

	The author wants to thank M. Darbas, S. Lohrengel and J. Dardé for the kind discussions and valuable comments along this project.

\end{document}